\newtheorem{thm}{Theorem}[section]
\newtheorem{prop}[thm]{Proposition}
\newtheorem{cor}[thm]{Corollary}
\newtheorem{lem}[thm]{Lemma}
\newtheorem{lema}{Lemma}
\newtheorem{cora}{Corollary}
\newtheorem*{claim}{Claim}
\newtheorem*{conv}{Convention}
\theoremstyle{definition}
\newtheorem{defn}[thm]{Definition}
\theoremstyle{remark}
\newtheorem{quest}[thm]{Question}
\newtheorem*{rem}{Remark}
\newtheorem{example}[thm]{Example}
\newcommand{\Gx }{\mathscr{G} (G, S)}
\newcommand{\Gy }{\mathscr{G} (G, T)}
\newcommand{\act}{\curvearrowright}
\newcommand{\g}[1]{\delta_{#1}}
\newcommand{\Gf}{\overline{G}_{S, f}}
\newcommand{\pGf}{\partial_{S, f}{G}}
\newcommand{\PS}[1]{\Theta_{#1}(s)}
\newcommand{\diam }[1]{{\|#1\|}}
\newcommand{\proj }{{{\Pi}}}
\newcommand{\len }{\ell}
\begin{document}

\title{Growth tightness for groups with contracting elements}

%    Information for first author
\author{Wen-yuan Yang}

%    Address of record for the research reported here
\address{Beijing International Center for Mathematical Research \&
School of Mathematical Sciences, Peking University, Beijing, 100871,
P.R.China}

\address{Le Département de Mathématiques de la Faculté des
Sciences d'Orsay, Université Paris-Sud 11, France}

\email{yabziz@gmail.com}
%    \thanks will become a 1st page footnote.
\thanks{This research is supported by the ERC starting grant GA 257110 "RaWG"}

%    Information for second author
% \author{Author Two}
% \address{Mathematical Research Section, School of Mathematical Sciences,
% Australian National University, Canberra ACT 2601, Australia}
% \email{two@maths.univ.edu.au}
% \thanks{Support information for the second author.}

%    General info
\subjclass[2000]{Primary 20F65, 20F67}

\date{Dec 14, 2013}

\dedicatory{}

\keywords{Floyd boundary, growth tightness, contracting elements}

\begin{abstract}
We establish growth tightness for a class of groups acting
geometrically on a geodesic metric space and containing a
contracting element. As a consequence,  any group with nontrivial
Floyd boundary are proven to be growth tight with respect to word
metrics. In particular, all non-elementary relatively hyperbolic
group are growth tight. This generalizes previous works of
Arzhantseva-Lysenok and Sambusetti. Another interesting consequence
is that CAT(0) groups with rank-1 elements are growth tight with
respect to CAT(0)-metric.
\end{abstract}

\maketitle

\section{Introduction}
In \cite{GriH}, R. Grigorchuk and P. de la Harpe introduced the
notion of growth tightness for a finitely generated group. Let $G$
be a group with a finite generating set $S$. Assume that $1 \notin
S$ and $S = S^{-1}$. Consider the word metric $d_S$ on $G$. Denote
$B(n)=\{g \in G: d_S(1, g) \le n\}$ for $n \ge 0$. The
\textit{growth rate} $\delta_{G, S}$ of $G$ relative to $S$ is the
limit
$$
\delta_{G, S} = \lim\limits_{n \to \infty} n^{-1} \ln \sharp B(n),
$$
which exists by the inequality that $\sharp B(n+m) \le \sharp B(n)
\cdot \sharp B(m)$.

For any quotient $\bar G$ of $G$, the $S$ naturally induces a finite
generating set $\bar S$ of $\bar G$. Clearly, $\g {G, S} \ge \g
{\bar G, \bar S}$. Thus, the property of growth tightness is
requiring the inequality to be strict. More precisely,
\begin{defn}
The pair $(G, S)$ is called \textit{growth tight} if $\delta_{G, S}
> \delta_{\bar G, \bar S}$ for any quotient $\bar G=G/\Gamma$,
where $\Gamma$ is an infinite normal subgroup, and $\bar S$ is the
canonical image of $S$ in $\bar G$. We say that $G$ is
\textit{growth tight} if it is growth tight with respect to every
finite generating set.
\end{defn}
\begin{rem}
For notational simplicity, we omit the index $S, \bar S$ in $\g {G,
S}, \g {\bar G, \bar S}$ if it is understood. We exclude that
$\Gamma$ is finite, as we always have $\g G=\g {\bar G}$ in this
case.
\end{rem}

The concept of growth tightness turns out to reflect some negative
curvature features of groups. Recall that an infinite group is
called \textit{elementary} if it is a finite extension of a cyclic
group. In \cite{AL},  Arzhantseva-Lysenok proved that all
non-elementary hyperbolic groups are growth tight. Later, Sambusetti
proved in \cite{Sam2} that the free product of two groups is growth
tight, unless it is elementary. On the other hand, the direct
product of any two groups is not growth tight with respect to some
generating set.

In \cite{Sam1}, Sambusetti also proposed a study of generalization
of growth tightness for any proper left-invariant (pseudo-)metric
$d$ on $G$. Recall that $d$ is \textit{proper} if any closed ball of
finite radius is finite, and is \textit{left-invariant} if $d(h_1,
h_2)=d(gh_1, gh_2)$ for any $g, h_1, h_2 \in G$.  Such pseudo-metric
$d$ on $G$ naturally arises as the pullback of a metric $d$ on a
metric space $Y$ on which $G$ acts properly. In other words, we are
actually talking about growth tightness for a proper action.

Let $G$ be a group acting properly on a geodesic metric space $(Y,
d)$. Fix a basepoint $o \in Y$. Denote $B(o, n) = \{go: g \in G,
d(o, go) \le n\}$ for $n\in \mathbb N$.  The \textit{growth rate}
$\g A$ of a subset $A \subset G$ relative to $d$ is thus defined as
$$
\g A = \limsup\limits_{n \to \infty} n^{-1} \ln \sharp (B(o, n) \cap
Ao).
$$
Note that $\g A$ does not depend on the choice of $o$.

Let $\Gamma$ be a normal subgroup in $G$. Then $\bar G = G/\Gamma$
acts on $\bar X=X/\Gamma$ by $\Gamma g \cdot (\Gamma x) \to \Gamma
gx$. Equip $\bar X$ with the quotient metric $\bar d$. We call $\bar
G$ a \textit{proper} quotient if $\Gamma$ is infinite. Denote by
$\g{\bar G}$ the growth rate of $\bar G$ relative to $\bar d$.
\begin{defn}
The action $G \act (X, d)$ is called \textit{growth tight} if $\g{G}
> \g{\bar G}$ for any proper quotient $\bar G$ of $G$.
We can also say that $G$ is growth tight with respect to $d$.
\end{defn}

In recent years growth tightness for groups with respect to various
classes of negative curvature metrics are established. In
\cite{Sam3}, growth tightness is proved for a group acting properly
and cocompactly on a simply connected Riemannian manifold with
negative curvature. In \cite{DPPS}, Dalbo-Peigne-Picaud-Sambusetti
generalized this result under some natural conditions, for a
discrete group on complete simply connected Riemannian manifolds
with pinched negative curvature. We refer the reader to \cite{AL},
\cite{Sam2}, \cite{Sam3}, \cite{Sab}, \cite{DPPS} for further
references and applications therein.

The present paper is a continuation of above studies and aiming to
show the growth tightness for a class of \textit{groups with a
contracting element}. Fix a preferred class of quasigeodesics
$\mathcal L$, which contains at least all geodesics in $Y$.
Informally speaking, a subset $X$ has \textit{contracting} property
if any path in $\mathcal L$ far from $X$ has an uniform bounded
projection to $X$. An element $h$ in $G$ is called
\textit{contracting}, if some (or any) orbit of $\langle h\rangle$
in $Y$ is contracting.

Recall that an action $G \act (Y, d)$ is \textit{geometric} if it is
proper and cocompact. Our main result can be thus formulated as
follows.
\begin{thm}\label{tight}
If a non-elementary group $G$ admits a geometric action on a metric
space $(Y, d)$ with a contracting element, then $G$ is growth tight
with respect to $d$.
\end{thm}

We now describe several examples of groups with a contracting
element. First of all, it is well-known that a hyperbolic isometry
on a hyperbolic space is contracting. Thus as a corollary, we
recover a result of Sabourau in \cite{Sab} as follows.
\begin{cor}\label{tightHYP}
Suppose $G$ admits a geometric action on a geodesic hyperbolic space
$(Y, d)$. Then $G$ is growth tight with respect to the metric $d$.
\end{cor}

The next interesting examples are CAT(0) groups with a rank-1
element. Namely, these are groups acting geometrically on a CAT(0)
space with a \textit{rank-1} element, which is a hyperbolic element
such that every axe does not bound a half plane. In \cite{BF}, it is
proved that every axe of a rank-1 element is contracting. Hence, we
obtain the following result.

\begin{cor}\label{tightCAT}
Suppose a non-elementary group $G$ admits a geometric action on a
CAT(0) space $(Y, d)$. If $G$ contains a rank-1 element, then $G$ is
growth tight with respect to the metric $d$.
\end{cor}

The final, and most important, examples in this paper are groups
with nontrivial Floyd boundary.  The notion of Floyd boundary was
introduced in \cite{Floyd} to compactify a finitely generated group.
Floyd boundary $\pGf$ depends on the choice of a finite generating
set $S$ and a \textit{Floyd function} $f$, cf. Section 7 for
definitions. Observe that the non-triviality of Floyd boundary is a
property of the pair $(G, S)$ in a certain sense (cf. Lemma
\ref{nontri}). We further note that every hyperbolic element gives a
contracting element for the action of $G$ on its Cayley graph. Thus,
by Theorem \ref{tight}, we obtain our second main result.

\begin{thm}\label{tightfloyd}
If $G$ admits a non-trivial Floyd boundary $\pGf$ for some finite
generating set $S$ and Floyd function $f$, then $G$ is growth tight
with respect to every finite generating set.
\end{thm}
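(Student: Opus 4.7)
The plan is to combine two ingredients: the convergence group dynamics arising from a non-trivial Floyd boundary, and a Schottky-style counting argument in the spirit of Sambusetti and Arzhantseva--Lysenok. Fix any finite generating set $S'$ of $G$ and an infinite normal subgroup $\Gamma \triangleleft G$; the goal is to establish the strict inequality $\delta_{G, S'} > \delta_{G/\Gamma, \bar{S'}}$.

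First, I would invoke Lemma \ref{nontri} to transport the non-triviality of the Floyd boundary from the reference generating set to $S'$, so that $G$ acts on $\pGf$ (for appropriate parameters) as a non-elementary convergence group. The classical dichotomy for normal subgroups of non-elementary convergence groups then forces $\Gamma$ itself to be non-elementary; in particular $\Gamma$ contains a loxodromic element $h$ with distinct fixed points $\{h^{+}, h^{-}\} \subset \pGf$.

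Second, and this is the technical core, I would establish a strongly contracting property for the cyclic subgroup $\langle h \rangle$ in $\Gx$: there exist constants $D, R > 0$ such that the nearest-point projection $\proj$ onto the orbit $\langle h \rangle \cdot e$ has diameter at most $D$ when restricted to any geodesic segment staying outside the $R$-neighborhood of this orbit. The Floyd geometry enters via the principle that a geodesic remaining far from $\langle h \rangle$ cannot approach both $h^{+}$ and $h^{-}$ in $\pGf$, so its projection onto the axis must be bounded. Converting this into a quantitative statement requires matching Floyd distance against word distance.

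Third, with the contracting property in hand, I would show that for $N$ sufficiently large, concatenations of the form $g_0 h^{n_1} g_1 h^{n_2} \cdots h^{n_k} g_k$ with $\abs{n_i} \geq N$ and each $g_i$ drawn from a suitable Schottky set $W \subset G$ avoiding $h^{\pm}$ are quasi-geodesics with constants independent of $k$. This produces on the order of $(\card{W}\, N)^k$ distinct elements of $G$ of word length $\lesssim k(M + N)$, where $M = \max_{w \in W} \abs{w}$. Since $h \in \Gamma$, fixing $g_0, \ldots, g_k$ and varying only the exponents $n_i$ yields roughly $N^k$ elements with identical image in $G/\Gamma$; comparing the resulting ball cardinalities produces the strict inequality $\delta_{G/\Gamma, \bar{S'}} < \delta_{G, S'}$.

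The principal obstacle is the second step: extracting a strongly contracting property for $\langle h \rangle$ directly from convergence dynamics on the Floyd boundary. In the Gromov-hyperbolic world, quasi-convexity of quasi-axes is automatic; in the Floyd setting the control must be mediated through the Floyd metric, which decays with word distance from the identity. The decisive lemma is therefore a Morse-type statement asserting that any geodesic in $\Gx$ whose endpoints are Floyd-close to $h^{\pm}$ must fellow-travel $\langle h \rangle \cdot e$ with uniform constants. Once this is secured, the third step reduces to a straightforward bookkeeping of Schottky words.
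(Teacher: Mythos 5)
Your first two steps track the paper: Lemma \ref{nontri} transports non-triviality of the Floyd boundary to an arbitrary generating set, a hyperbolic element $h\in\Gamma$ is produced from the convergence action, and the strong contraction you label the ``technical core'' is exactly what the paper imports from Gerasimov--Potyagailo via Lemma \ref{hyperbolic}(3),(4) (cocompactness of $E(h)$ on $\Gf\setminus\{h_-,h_+\}$ gives that $C(h_-,h_+)$ is contracting with bounded intersection of translates), so no new Morse-type lemma is needed. The genuine gap is in your third step. With a \emph{finite} Schottky set $W$ (you set $M=\max_{w\in W}\abs{w}$), the count of roughly $(\card{W}\,N)^k$ distinct products of length at most about $k(M+N)$ only yields $\delta_{G,S'}\ge (\ln\card{W}+\ln N)/(M+N+C)$, a quantity with no a priori relation to $\delta_{G/\Gamma,\bar S'}$. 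Even if you upgrade $W$ to a ball of radius $M$ in a transversal of $\Gamma$, so that $\card{W}\ge\exp(\delta_{\bar G}M)$, the bound becomes $(\delta_{\bar G}M+\ln N)/(M+N+C)$, which exceeds $\delta_{\bar G}$ only if $\ln N>\delta_{\bar G}(N+C)$ --- false in general, and in particular for the large $N$ your quasigeodesic condition forces. Each inserted block $h^{n_i}$ costs word length about $N$ while contributing only $\ln N$ of entropy, so a naive comparison of ball cardinalities cannot produce the strict inequality; this is precisely the obstruction the critical gap criterion is designed to circumvent.

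The paper's route replaces the ball count by the Dalbo--Peigne--Picaud--Sambusetti criterion (Lemma \ref{degrowth}): take $A$ to be an $L$-separated net in the \emph{entire} set $G_\Gamma$ of minimal coset representatives, so that $\delta_{A,S}=\delta_{\bar G,\bar S}$ (Lemmas \ref{wordmetric}, \ref{convradius}) and, crucially, $\PS{A,d_S}$ diverges \emph{at} its critical exponent (Lemma \ref{divergent}, via Fekete); injectivity of $\kappa:\mathcal W(A,h^n)\to G$ then forces $\sum_{a\in A}\exp(-s\,d_S(1,ah^n))>1$ for some $s>\delta_{A,S}$, which is the strict gap. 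Two further points your sketch elides: the injectivity is not pure ping-pong, because $G_\Gamma$ is not a subgroup, so cancellation in $W^{-1}W'$ need not remain a word of the prescribed form; the paper handles this with Lemma \ref{orth} (geodesics $[o,go]$, $g\in G_\Gamma$, project boundedly to $C(h_-,h_+)$ exactly because $g$ is a minimal representative and $h\in\Gamma$) combined with the admissible-path Lemma \ref{Rnear2}. To close your proof you would need to replace the cardinality comparison by this weighted (Poincar\'e series) argument, or by an insertion argument gaining a combinatorial factor per unit length, neither of which is present in your outline.
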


In \cite{Ge2}, Gerasimov proved that all non-elementary relatively
hyperbolic groups have non-trivial Floyd boundary. As a consequence,
we obtain the following corollary, extending the results of
Arzhantseva-Lysenok \cite{AL} and Sambusetti \cite{Sam2}.
\begin{cor}\label{tighthyp}
Non-elementary relatively hyperbolic groups are growth tight.
\end{cor}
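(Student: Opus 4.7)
The plan is to reduce Corollary \ref{tighthyp} directly to Theorem \ref{tightfloyd} by verifying, for an arbitrary non-elementary relatively hyperbolic group $G$, that the hypothesis of nontrivial Floyd boundary is satisfied for some choice of finite generating set and Floyd function. Since Theorem \ref{tightfloyd} requires the nontriviality assumption only for \emph{some} pair $(S, f)$, the corollary is essentially a question of exhibiting such a pair.

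First I would pick any finite generating set $S$ of $G$. Because $G$ is non-elementary relatively hyperbolic, its Bowditch boundary $\partial_B G$ is an infinite perfect compactum; in particular it contains at least three points. This is standard and follows from the existence of infinitely many loxodromic fixed pairs in the non-elementary setting. Next I would invoke Gerasimov's theorem from \cite{Ge2}, already recalled in the introduction: for this $S$ there exists $\lambda \in \,]0,1[\,$ such that, setting $f(n)=\lambda^n$, there is a continuous surjection $\partial_{S,f}G \twoheadrightarrow \partial_B G$. Surjectivity forces $\partial_{S,f}G$ to have at least three points, hence $\partial_{S,f}G$ is nontrivial in the sense of the paper.

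With nontriviality of the Floyd boundary established for this particular $(S,f)$, Theorem \ref{tightfloyd} applies directly and yields that $G$ is growth tight, completing the proof. I would add a short remark on why this covers the definition of growth tightness in full: growth tightness is a property that must hold for \emph{every} finite generating set, but the hypothesis of Theorem \ref{tightfloyd} only requires nontriviality for some $(S,f)$; the work of turning a single pair into a statement for all generating sets is already absorbed into the proof of Theorem \ref{tightfloyd}.

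I do not expect any genuine obstacle in this argument, since both ingredients are in place: the existence of the Floyd--Bowditch continuous surjection is Gerasimov's theorem, and the growth-tightness statement is Theorem \ref{tightfloyd}. The only point deserving care is the non-elementarity hypothesis, which is precisely what ensures that $\partial_B G$ has at least three points and hence feeds the Floyd boundary a non-degenerate image; without it the corollary would fail in obvious ways (e.g.\ for virtually cyclic groups or for $G$ itself being parabolic).
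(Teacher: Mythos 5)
Your proposal is correct and is essentially the paper's own argument: the corollary is deduced exactly as in the introduction, by invoking Gerasimov's continuous surjection from the Floyd boundary (with $f(n)=\lambda^n$ for suitable $\lambda$) onto the Bowditch boundary, which has at least three points in the non-elementary case, and then applying Theorem \ref{tightfloyd}. Your closing remark that a single pair $(S,f)$ suffices because the ``all generating sets'' part is absorbed into Theorem \ref{tightfloyd} (via Lemma \ref{nontri}) matches the paper's setup as well.
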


Let's remark about a relation between relatively hyperbolic groups
and groups with non-trivial Floyd boundary. As aforementioned, every
non-elementary relatively hyperbolic groups have non-trivial Floyd
boundary. It was asked by Olshanskii-Osin-Sapir in \cite{OOS}
whether its converse is true. They formulated the question for the
type of Floyd function $f(n)=1/(n^2 +1)$.

In \cite{YANG3}, we showed that their question is equivalent to the
question concerning about peripheral structures: whether every group
hyperbolic relative to a collection of \textit{Non-Relatively
Hyperbolic} subgroups acts geometrically finitely on its Floyd
boundary. Here a NRH group refers to a group not hyperbolic relative
to any collection of proper subgroups. Therefore, it is yet unknown
that whether the class of groups with non-trivial Floyd boundary
coincide with the class of (non-elementary) relatively hyperbolic
groups.

Interestingly, we prove in the appendix that every group with
non-trivial Floyd boundary contains a hyperbolically embedded
subgroup. The class of groups with hyperbolically embedded subgroups
are proposed in \cite{DGO} by Dahmani-Guirardel-Osin as a natural
generalization of relatively hyperbolic groups. It is worth to point
out that there are many groups with hyperbolically embedded
subgroups but which have trivial Floyd boundary. For instance,
mapping class groups with a few exceptions have trivial Floyd
boundary, see \cite{KaNos} for more examples.

The structure of this paper is as follows. In section 2, we discuss
a contracting property of a subset and deduce some preliminary
results.  Section 3 is devoted to a key notion of an
\textit{admissible path} in a geodesic metric space with a system of
contracting subsets. Our technical result is that a 'long'
admissible path is a quasigeodesic. Section 4 brings in a group
action over the metric space and defines a group with a contracting
element. We show that any infinite normal subgroup contains
infinitely many contracting elements. In Section 5, we briefly
recall a critical gap criterion of Dalbo-Peigne-Picaud-Sambusetti
\cite{DPPS}.  In Section 6,  these ingredients are brought together
to prove Theorem \ref{tight}. We describe in some detail in Section
7 about groups with non-trivial Floyd boundary and explain a proof
of Theorem \ref{tightfloyd}. Some open problems are posed in Section
8. In the appendix, we prove that groups with a contracting element
contains a hyperbolically embedded subgroup.

\section{Contracting sets}

The purpose of this section is to discuss a notion of a contracting
subset in a geodesic metric space. The motivating examples are
parabolic subgroups in relatively hyperbolic groups and contracting
segments in CAT(0) spaces.

\subsection{Notations and Conventions}
Let $(Y, d)$ be a geodesic metric space. Given a subset $X$ and a
number $r \ge 0$, let $N_r(X) = \{y \in Y: d(y, X) \le r \}$ be the
closed neighborhood of $X$ with radius $r$. Denote by $\diam {X}$
the diameter of $X$ with respect to $d$.

Fix a (sufficiently small) number $\delta > 0$ that won't change in
the rest of paper. Given a point $y \in Y$ and subset $X \subset Y$,
let $\proj_X (y)$ be the set of points $x$ in $X$ such that $d(y, x)
\le d(y, X) + \delta$. Define \textit{the projection} of a subset
$A$ to $X$ as $\proj_X(A) = \cup_{a \in A} \proj_X(a)$.

Let $p$ be a path in $Y$ with initial and terminal endpoints  $p_-$
and $p_+$ respectively. Denote by $\len (p)$ the length of $p$.
Given two points $x, y \in p$, denote by $[x,y]_p$ the subpath of
$p$ going from $x$ to $y$.

Let $p, q$ be two paths in $Y$. Denote by $p\cdot q$(or $p q$ if it
is clear in context) the concatenated path provided that $p_+ =
q_-$.

A path $p$ going from $p_-$ to $p_+$ induces a first-last order as
we describe now. Given a property (P), a point $z$ on $p$ is called
the \textit{first point} satisfying (P) if $z$ is among the points
$w$ on $p$ with the property (P) such that $\len([p_-, w]_p)$ is
minimal. The \textit{last point} satisfying (P) is defined in a
similarly way.

Let $f(x,y): \mathbb R \times \mathbb R \to \mathbb R_+$ be a
function. For notational simplicity, we frequently write $f_{x,y} =
f(x,y)$.

\subsection{Contracting subsets}

\begin{defn}[Contracting subset]
Suppose $\mathcal L$ is a preferred collection of quasigeodesics in
$X$. Let $\mu: \mathbb R \times \mathbb R \to \mathbb R_+$ and
$\epsilon: \mathbb R \times \mathbb R \to \mathbb R_+$ be two
functions.

Given a subset $X$ in $Y$, if the following inequality holds
$$\diam{\proj_{X} (q)} < \epsilon(\lambda, c),$$
for any $(\lambda, c)$-quasigeodesic $q \in \mathcal L$ with $d(q,
X) \ge \mu(\lambda, c)$, then $X$ is called $(\mu,
\epsilon)$-\textit{contracting} with respect to $\mathcal L$. A
collection of $(\mu, \epsilon)$-contracting subsets is referred to
as a $(\mu, \epsilon)$-\textit{contracting system} (with respect to
$\mathcal L$).

%(If $\epsilon$ is a constant function, then $X$ is called a
%\textit{strongly $(\mu, \epsilon)$-contracting subset}.)
\end{defn}

\begin{rem}
The terminology "contracting segments" was used by
Bestivina-Fujiwara in \cite{BF2}. It is easy to see that the
contracting notion in their sense is equivalent to ours, cf.
Corollary 3.4 in \cite{BF2}.
\end{rem}

\begin{example} \label{examples} We note the following examples in various contexts.
\begin{enumerate}
\item
Quasigeodesics and quasiconvex subsets are contracting with respect
to the set of all quasigeodesics in hyperbolic spaces. These are
best-known examples in the literature.
\item
Fully quasiconvex subgroups (and in particular, maximal parabolic
subgroups) are contracting with respect to the set of all
quasigeodesics in relatively hyperbolic groups (see Proposition
8.2.4 in \cite{GePo4}).
\item
The subgroup generated by a hyperbolic element is contracting  with
respect to the set of all quasigeodesics in groups with non-trivial
Floyd boundary.  This will be described in Section 7.
\item
Contracting segments in CAT(0)-spaces in the sense of in
Bestvina-Fujiwara are contracting here with respect to the set of
geodesics (see Corollary 3.4 in \cite{BF2}).
\item
Any finite neighborhood of a contracting subset is still contracting
with respect to the same $\mathcal L$.
\end{enumerate}
\end{example}

\begin{conv}
In view of Examples \ref{examples}, the preferred collection
$\mathcal L$ in the sequel is always assumed to be containing all
geodesics in $Y$.
\end{conv}

\begin{defn}[Quasiconvexity]
Let $\sigma: \mathbb R \to \mathbb R_+$ be a function. A subset $X
\subset Y$ is called \textit{$\sigma$-quasiconvex} if given $U \ge
0$,  any geodesic with endpoints in $N_U(X)$ lies in the
neighborhood $N_{\sigma(U)}(X)$.
\end{defn}

Clearly, the contracting property implies the quasiconvexity. The
proof is straightforward and left to the interested reader.

\begin{lem}\label{quasiconvexity}
Let $X$ be a $(\mu, \epsilon)$-contracting subset in $Y$. Then there
exists a function $\sigma: \mathbb R \to \mathbb R_+$ such that $X$
is $\sigma$-quasiconvex.
\end{lem}
%\begin{proof}
%Given $U \ge 0$, let $\gamma$ be a geodesic with endpoints in
%$N_U(X)$. Define $\sigma(U) = 3\max(U, \mu_{1,0}) + \epsilon_{1,0}$.
%It suffices to verify that $\gamma \subset N_{\sigma(U)}(X)$.

%Let $z$ be a point in $\gamma$ such that $d(z, X) \ge \mu_{1,0}$.
%Denote by $p$ the maximal connected segment of $\gamma$ containing
%$z$ such that $d(p, X) \ge \mu(1,0)$. Then $\diam{\proj_X(p)} <
%\epsilon_{1,0}$. Note that $d(p_-, X) \le \max(U, \mu_{1, 0})$.
%Hence, it follows that $$d(z, X) \le d(z, p_-) + d(p_-, X) \le
%\sigma(U),$$ which finishes the proof.
%\end{proof}

We now shall introduce an additional property, named bounded
intersection property for a contracting system.
\begin{defn}[Bounded Intersection]
Given a function $\nu: \mathbb R \to \mathbb R_+$, two subsets $X,
X' \subset Y$ have \textit{$\nu$-bounded intersection} if the
following inequality holds
$$\diam{N_U (X) \cap N_U (X')} < \nu(U)$$
for any $U \geq 0$.
\end{defn}
\begin{rem}
Typical examples include sufficiently separated quasiconvex subsets
in hyperbolic spaces, and parabolic cosets in relatively hyperbolic
groups(see \cite{DruSapir}).
\end{rem}

A $(\mu, \epsilon)$-contracting system $\mathbb X$ is said to have
\textit{$\nu$-bounded intersection} if any two distinct $X, X' \in
\mathbb X$ do so. A related notion is the following bounded
projection property, which is equivalent to the bounded intersection
property under the contracting assumption.

\begin{defn}[Bounded Projection]
Let $X, X' \subset Y$ be two subsets. Then $X$
has\textit{$B$-bounded projection} to $X'$ for some $B > 0$ if the
following holds
$$\diam{\proj_{X'}(X)} < B$$
\end{defn}

\begin{lem}[Bounded intersection $\Leftrightarrow$ Bounded projection]\label{equivalence}
Assume that $X, X'$ are two $(\mu, \epsilon)$-contracting subsets.
Then $X, X'$ have $\nu$-bounded intersection for some $\nu: \mathbb
R \to \mathbb R_+$ if and only if they have $B$-bounded projection
for some $B > 0$.
\end{lem}
\begin{proof}
$\Rightarrow$: Let $z, w \in \proj_X(X')$ be such that $d(z,w) =
\diam{\proj_X(X')}$. Then there exist $\hat z, \hat w \in X'$ which
project to $z, w$ respectively. Set $B = 2\epsilon_{1, 0} +
\nu(\mu_{1,0})$.

Let's consider $d(z, X)> \mu_{1, 0}$ and $d(w, X)> \mu_{1, 0}$.
Other cases are easier. Let $p$ be a geodesic segment between $z,
w$. Let $\hat u, \hat v$ be the first and last points respectively
on $p$ such that $d(\hat u, X) \le \mu_{1, 0}, d(\hat v, X) \le
\mu_{1, 0}$. Let $u, v$ be a projection point of $\hat u, \hat v$ to
$X$ respectively. Then $d(u, v) \le \nu(\mu_{1,0})$ by the
$\nu$-bounded intersection of $X, X'$. Since $X$ is a $(\mu,
\epsilon)$-contracting subset, we obtain that $d(z, u) \le
\epsilon_{1, 0}, d(w, v) \le \epsilon_{1, 0}$. Hence
$$d(z, w) \le d(z, u) + d(u, v) + d(v, w) \le B.$$

$\Leftarrow$: Given $U >0$, let $z, w \in N_U(X) \cap N_U(X')$. Let
$\hat z, \hat w \in X'$ be such that $d(\hat z, z) \le U, d(\hat w,
z) \le U$. Project $z, w$ to $z', w' \in X$ respectively. Then $d(z,
w) \le d(z', w') + 2U$. We are going to bound the distance $d(z',
w')$.

Observe that the diameter of projection of a geodesic segment of
length $U$ to $X$ is upper bounded by $(2\mu_{1, 0} + \epsilon_{1,
0} + U)$. Hence $\diam{\proj_X([\hat z, z])} \le (2\mu_{1, 0} +
\epsilon_{1, 0} + U), \diam{\proj_X([\hat w, w])} \le (2\mu_{1, 0} +
\epsilon_{1, 0} + U)$. It follows that
$$
\begin{array}{rl}
d(z' ,w') &\le \diam{\proj_X([\hat z, z])} + \diam{\proj_X(X')} +
\diam{\proj_X([\hat w, w])} \\
&\le B +2(2\mu_{1, 0} + \epsilon_{1, 0} + U).
\end{array}
$$

Then $d(z, w) \le B +4\mu_{1, 0} + 2\epsilon_{1, 0} + 2U$. It
suffices to set $\nu(U) = B +4\mu_{1, 0} + 2\epsilon_{1, 0} + 2U$.
\end{proof}

It is proved in \cite{BF2} that a triangle with one side near a
contracting subset is thin. We formulate a similar result here.
\begin{lem}\label{thin}
Let $X$ be a $(\mu, \epsilon)$-contracting set and $x \in X$ and $y
\notin X$ two points. Then there is a constant $\sigma >0$ such that
for any projection point $o$ of $y$ to $X$, we have $d(o, [x, y])
\le \sigma$.
\end{lem}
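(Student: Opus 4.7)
The plan is to use the classical ``last exit from the $\mu$-neighborhood'' argument. Set $p = [x,y]$, and let $o \in \proj_X(y)$. If $d(y,X) \le \mu$, then $d(o,y) = d(y,X) \le \mu$ and we are done with $\sigma = \mu$. Otherwise, by continuity of the distance function along $p$ and the fact that $d(x,X)=0 < \mu < d(y,X)$, there exists a last point $z$ on $p$ with $d(z,X) = \mu$.

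The point of this choice of $z$ is that the subpath $q := [z,y]_p$, being a subpath of a geodesic, is itself a geodesic, and by construction every point of $q$ has distance at least $\mu$ from $X$, so $d(q,X) \ge \mu$. The contracting hypothesis therefore applies to $q$ and yields
\[
\diam{\proj_X(q)} < \epsilon.
\]
Since $y, z \in q$, I may pick any $o' \in \proj_X(z)$ and conclude $d(o, o') < \epsilon$.

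Finally, $d(z, o') = d(z, X) = \mu$, so by the triangle inequality
\[
d(o, z) \le d(o, o') + d(o', z) < \epsilon + \mu.
\]
As $z \in p = [x,y]$, this gives $d(o, [x,y]) < \mu + \epsilon$, so the lemma holds with, for instance, $\sigma := \mu + \epsilon$.

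I do not expect any genuine obstacle here: the only subtlety is ensuring that the subsegment $[z,y]_p$ really satisfies the distance hypothesis of the contracting definition, which is handled by taking $z$ to be the \emph{last} point on $p$ at distance $\mu$ from $X$ (rather than, say, the first, which would leave a portion of the geodesic near $X$ between $z$ and $y$). The argument is robust in that it does not use $x \in X$ in any essential way beyond guaranteeing that the distance to $X$ starts below $\mu$; it therefore also gives a version for arbitrary basepoints at distance $\le \mu$ from $X$.
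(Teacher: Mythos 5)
Your argument is correct, and it is essentially the standard proof of this fact: the paper itself gives no proof (the lemma is quoted from \cite{YANG5}), and the ``last exit point'' argument you use — taking the last point $z$ of $[x,y]$ at distance $\mu$ from $X$, applying the contracting property to the subgeodesic $[z,y]$, and comparing projections of $z$ and $y$ — is the expected one, with the explicit constant $\sigma = \mu + \epsilon$. The only implicit step is the existence of a projection point of $z$ to $X$ (i.e.\ that $\proj_X(z) \ne \emptyset$), which is harmless here since the ambient space is a locally finite Cayley graph, so nearest points in $X$ exist.
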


It is useful to note the following fact with essential same idea as
Lemma \ref{thin}.
\begin{lem}\label{firststep}
Given $\lambda \ge 1, c \ge 0$, let $\gamma = p q$, where $p$ is a
geodesic and $q$ is a $(\lambda,c)$-quasigeodesic in $\mathcal L$.
Assume that $p_-, p_+ \in X \in \mathbb X$ and $q$ has
$\tau$-bounded projection to $X$. Then $\gamma$ is a $(\lambda,
C_{\lambda,c})$-quasigeodesic, where
\begin{equation}\label{C}
C_{\lambda,c} =\lambda(\mu_{\lambda,c}+ \epsilon_{\lambda,c} + \tau)
+ c.
\end{equation}
\end{lem}
%\begin{proof} Let $\alpha$ be a geodesic such that $\alpha_- =
%\gamma_-, \alpha_+ = \gamma_+$. Let $z$ be the last point on
%$\alpha$ such that $d(z, X) \le \mu_{\lambda,c}$. Project $z$ to a
%point $z'$ on $X$. Then $d(z, z') \leq \mu_{\lambda,c}$. By
%projection, we have
%$$\begin{array}{rl}
%d(z, q_-) &\le \diam{\proj_{X}([\alpha_-, z]_\alpha)} +
%\diam{\proj_{X}(q)} \\
%&\le \epsilon_{1,0} + A_{\lambda,c}.
%\end{array}
%$$
%Then we have
%$$\begin{array}{rl}
%d(z, q_-) &\le d(q_-, z) + d(z, z')\\
%&\le \mu_{1,0}+ \epsilon_{1,0} + A_{\lambda,c}.
%\end{array}
%$$
%Hence $\len(\gamma) = \len(p) + \len(q) < \lambda d(\gamma_-,
%\gamma_+) + c \le \lambda d(\gamma_-, \gamma_+) + C_{\lambda,c}$.
%\end{proof}

\section{Admissible Paths}

This section is is devoted to a notion of an admissible path. The
motivation behind this concept is that we try to give an abstraction
of the concept of a local geodesic in a hyperbolic space, and
establish a generalization for the result in \cite{GH} that a "long"
local geodesic is a global quasi-geodesic. In the absence of
hyperbolicity, our most effort is to deduce such an analogous result
by employing only contracting property in a geodesic metric space.

The terminology of "admissible path" is inspired by the one of
Bestvina-Fujiwara in \cite{BF2}, but possesses quite a different
sense. Roughly speaking, an admissible path can be thought as a
concatenation of quasigeodesics which travels alternatively near
contracting subsets and leave them in an orthogonal way. We
formulate it as follows.
\subsection{Admissible paths}
Recall that $\mathcal L$ is a preferred collection of quasigeodesics
in $Y$ such that $\mathcal L$ contains all geodesics. In what
follows, let $\mathbb X$ be a $(\mu, \epsilon)$-contracting system
in $Y$ with respect to $\mathcal L$. Then each $X \in \mathbb X$ is
$\sigma$-quasiconvex, where $\sigma$ is given by Lemma
\ref{quasiconvexity}.

\begin{defn}[Admissible Path]\label{AdmDef}
Fix a function $\nu: \mathbb R \to \mathbb R_+$ and $\lambda\geq 1,
c \geq 0, \tau>0$. Given $D \ge 0$, a \textit{$(D, \lambda, c, \nu,
\tau)$-admissible path} $\gamma$ is a concatenation of  $(\lambda,
c)$-quasigeodesics in $Y$, associated with a collection $\mathbb Z$
of contracting subsets in $\mathbb X$ satisfying the following.

\begin{enumerate}
\item
Exactly one, say $p_i$, of any two consecutive quasigeodesics in
$\gamma$ has two endpoints in $X_i \in \mathbb Z$,

\item
Each $p_i$ has length bigger than  $\lambda D + c$, except that
$p_i$ is the first or last quasigeodesic in $\gamma$,

\item
For each $X_i$,  a quasigeodesic in $\gamma$ with only one endpoint
in $X_i$ lies in $\mathcal L$ and has $\tau$-bounded projection to
$X_i$, and

\item
Either any two $X_i, X_{i+1}$(if defined) have $\nu$-bounded
intersection, or the quasigeodesic, say $q_{i+1}$, between them has
length bigger than $\lambda D + c$.
\end{enumerate}
\end{defn}
\begin{rem}
If $\mathbb X$ has $\nu$-bounded intersection, then the condition
(4) is always fulfilled. By definition, it is not required that $X_i
\ne X_j$ for $i \ne j$. This often facilitates the verification of a
path being admissible.
\end{rem}

For definiteness in the sequel, we usually write $\gamma = p_0 q_1
p_1 \ldots q_n p_n$ and assume that $p_i$ has endpoints in a
contracting subset $X_i \in \mathbb Z$ and the following conditions
hold.

\begin{enumerate}
\item
$\ell(p_{i}) > \lambda D + c$ for $0 < i < n$.
\item
$q_i \in \mathcal L$ has $\tau$-bounded projection to both $X_{i-1}$
and $X_i$ for $1\leq i \leq n$.
\item
For $1\leq i \leq n$, either $\len(q_i)
> \lambda D + c$, or $X_{i-1}$ and $X_i$ have $\nu$-bounded
intersection.
\end{enumerate}

\begin{defn}[Fellow Traveller]\label{Fellow}
Assume that $\gamma = p_0 q_1 p_1 ... q_n p_n$ is an admissible
path, where each $p_i$ has two endpoints in $X_i \in \mathbb X$. Let
$\alpha$ be a path such that $\alpha_- = \gamma_-,
\alpha_+=\gamma_+$.

Given $R >0$, the path $\alpha$ is a \textit{$R$-fellow traveller}
for $\gamma$ if there exists a sequence of successive points $z_i,
w_i$($0 \le i \le n$) on $\alpha$ such that $d(z_i, w_i) \ge 1$ and
$d(z_i, (p_{i})_-) < R, \;d(w_i, (p_{i})_+) < R.$
\end{defn}

\subsection{Quasi-geodesicity of long admissible paths}
The aim of this subsection is to show that  a $(D, \lambda, c, \nu,
\tau)$-admissible path is a quasigeodesic for any sufficiently large
$D$.

\begin{prop}\label{admissible}
Fix a function $\nu: \mathbb R \to \mathbb R_+$ and $\lambda \geq 1,
c \geq 0, \tau>0$. There are constants $D=D(\lambda, c, \nu, \tau)
>0, R = R(\lambda, c, \nu, \tau)>0$ such that the following
statement holds.

Let $\gamma$ be a $(D_0, \lambda, c, \nu, \tau)$-admissible path for
$D_0 > D$. Then any geodesic $\alpha$ between $\gamma_-$ and
$\gamma_+$ is a $R$-fellow traveller for $\gamma$.
\end{prop}

The main corollary is that a "long" admissible path is a
quasigeodesic.

\begin{cor}\label{maincor}
There are constants $D=D(\lambda, c, \nu, \tau)
>0, \Lambda = \Lambda(\lambda, c, \nu, \tau)>1$ such that given $D_0
>D$ any $(D_0, \lambda, c,\nu, \tau)$-admissible path is a $(\Lambda,
0)$-quasigeodesic.
\end{cor}
\begin{proof}
Let $D , R$ be given by Proposition \ref{admissible}. It suffices to
set $\Lambda = \lambda(6R + 1) + 3c$ to complete the proof.
\end{proof}

The reminder of this subsection is devoted to the proof of
Proposition \ref{admissible}.

Let $\gamma = p_0 q_1 p_1 \ldots q_n p_n$ be a $(D_0, \lambda,
c)$-admissible path for $D_0 > D$, where $p_i, q_i$ are $(\lambda,
c)$-quasigeodesics. For definiteness, assume that each $p_i$ has two
endpoints in $X_i \in \mathbb Z$. Moreover, assume that each $p_i$
is a geodesic, as the general case follows as a direct consequence.
The proof is achieved by the induction on the number of contracting
subsets $\mathbb Z=\{X_i: 0\le i\le n \}$ for $\gamma$.

We define, a priori, the candidate constants which are in fact
calculated in the course of proof:
$$
R = R(\lambda, c) =  \max \{(\ref{R1}), \;(\ref{R3})),
$$
and
$$
D = D(\lambda,c) =  \max\{(\ref{D1}),\; (\ref{D2})\;, (\ref{D4}),
\;(\ref{D5})\}
$$

We start with the lemma below, saying that two consecutive
quasigeodesics of $\gamma$ around $X_k$ have bounded projection to
$X_k$. For simplicity we denote $\proj_k (q)=\proj_{X_k} (q)$.

\begin{lem}\label{neartarget}
Let $X_k$$(0 \le k \le n)$ be a contracting subset for $\gamma$.
Then we have the following
$$\forall k >0: \diam {\proj_k(p_{k-1}  q_{k})} < B_{\lambda,c}, $$
and
$$\forall k < n: \diam {\proj_k(q_{k+1}  p_{k+1})} < B_{\lambda,c},$$
where
$$
B_{\lambda,c} = 2\epsilon_{1,0} + 2\mu_{1,0} + \nu(\mu_{1,0} +
\sigma_0) + \tau.
$$
\end{lem}

\begin{proof}
We only prove the inequality for the case $p_{k-1}q_{k}$. The other
case is similar. We claim the following inequality
\begin{equation}\label{Q1}
\diam {p_{k-1} \cap N_{\mu(1,0)}(X_k)} < \nu(\mu_{1,0} + \sigma_0) ,
\end{equation}
from which the conclusion follows. In fact, assuming the inequality
(\ref{Q1}) is true. Let $z$(resp. $w$) be the first(resp. last)
point of $p_{k-1}$ such that $z, w \in N_{\mu(1,0)}(X_k).$ Then we
have
$$
\begin{array}{rl}
\diam {\proj_k(p_{k-1}  q_k)} &<\diam {\proj_k([(p_{k-1})_-,
z]_{p_{k-1}})} + \diam {\proj_k([z,
w]_{p_{k-1}})} \\
& + \diam {\proj_k([w, (p_{k-1})_+]_{p_{k-1}})} + \diam
{\proj_k(q_k)}\\
&<2\epsilon_{1,0} + (2\mu_{1,0} + \nu(\mu_{1,0} + \sigma_0)) + \tau
\le B_{\lambda,c}.
\end{array}
$$

In order to prove (\ref{Q1}), we examine the following two cases by
the definition of an admissible path.

\textit{Case 1:} $\len (q_k) > \lambda D + c$. We will show that
$p_{k-1} \cap N_{\mu(1,0)}(X_k) = \emptyset$ and hence (\ref{Q1})
holds trivially. Suppose not. Let $w$ be the last point on $p_{k-1}$
such that $d(w, X_k) \le \mu_{1,0}$. Project $w$ to a point $w' \in
X_k$. Then $d(w, w')< \mu_{1,0}$. Using projection, we obtain
$$\begin{array}{rl}
d(w, (q_k)_+) &< d(w, w') + d(w', (q_k)_+)\\
&< \mu_{1,0} + \diam{\proj_k [w, p_{k-1}]_{p_{k-1}}} + \diam{\proj_k (q_k)}\\
&< \mu_{1,0} + \epsilon_{1,0} + \tau.
\end{array}$$ Since $p_{k-1}
q_k$ is a $(\lambda, C_{\lambda,c})$-quasigeodesic by Lemma
\ref{firststep}, we have that
$$C_{\lambda,c} + \lambda d(w, (q_k)_+)> \len([w, (p_{k-1})_+]_{p_{k-1}}) + \len(q_k).$$
As it is assumed that
\begin{equation}\label{D1}
D> \mu_{1,0} + \epsilon_{1,0} + \tau+ C_{\lambda,c},
\end{equation}
this gives a contradiction with $\len(q_k) > \lambda D + c$.

\textit{Case 2:} Otherwise, $X_{k-1}, X_k$ have $\nu$-bounded
intersection. Then $p_{k-1}$ lies in $N_{\sigma(0)}(X_{k-1})$. By
the bounded projection property, we have $$\diam {p_{k-1} \cap
N_{\mu(1,0)}(X_k)} < \diam {N_{\sigma(0)} (X_{k-1}) \cap
N_{\mu(1,0)}(X_k)} < \nu(\mu_{1,0} + \sigma_0).$$ This establishes
(\ref{Q1}).
\end{proof}

We now prove the base step of induction.
\begin{lem}[Base Step]\label{basestep}
Proposition \ref{admissible} is true for $n=1$ and $n=2$.
\end{lem}
\begin{proof}
We shall prove a slightly stronger result: if $\alpha$ is a geodesic
such that $d(\alpha_-, \gamma_-) \le \mu_{1,0},\; d(\alpha_+,
\gamma_+) \le \mu_{1,0}$, then $\alpha$ is a $R$-fellow traveller
for $\gamma$.

\textbf{The case "$n=1$"}. Assume that $\gamma= q_1 p_1 q_2$, where
the geodesic $p_1$ has two endpoints in $X_1$. Since $d(\alpha_-,
\gamma_-), d(\alpha_+, \gamma_+) < \mu_{1, 0}$, it is easy to verify
the following
$$
\diam{\proj_1([\alpha_-, \gamma_-])} \le \epsilon_{1,0} + 3 \mu_{1,
0}, \; \diam{\proj_1([\alpha_+, \gamma_+])} \le \epsilon_{1,0} + 3
\mu_{1, 0}.
$$

We claim that $N_{\mu(1, 0)}(X_1) \cap \alpha \neq \emptyset$.
Suppose not. We estimate the length of $p_1$ by projection
$$
\begin{array}{rl}
\len (p_1) & \le \diam {\proj_1(q_1)} + \diam {\proj_1(\alpha)}
+\diam {\proj_1(q_2)} + \diam{\proj_1([\alpha_-, \gamma_-])} +
\diam{\proj_1([\alpha_+, \gamma_+])} \\
& \le 2\tau + 3\epsilon_{1, 0} + 6 \mu_{1, 0}.
\end{array}
$$
This gives a contradiction as it is assumed that
\begin{equation}\label{D2}
D > 2\tau + 3\epsilon_{1, 0} + 6 \mu_{1, 0}.
\end{equation}

Let $z$ and $w$ be the first and last points of $\alpha$ such that
$z, w \in N_{\mu(1, 0)}(X_1).$ Project $z, w$ to $z', w' \in X_1$
respectively. Hence we see
$$
\begin{array}{rl}
d((q_1)_+, z) &\le \diam {\proj_1(q_1)} + \diam {\proj_1([\alpha_-,
z]_\alpha)} + \diam{\proj_1([\alpha_-, \gamma_-])} + d(z, z') \\
& \le A_{\lambda, c} + 2\epsilon_{1, 0} + 4\mu_{1, 0} < R -1,
\end{array}
$$ as it is assumed that
\begin{equation}\label{R1}
R > \tau + 2\epsilon_{1, 0} + 4\mu_{1, 0} + 1.
\end{equation}
It is similar to prove that $d((q_2)_-, w) < R -1$. Up to a slight
modification of $z, w$, we see that $\alpha$ is a $R$-fellow
traveller for $\gamma$.

\textbf{The case "$n=2$"}. Assume that $\gamma = q_1 p_1 q_2 p_2
q_3$, where the geodesic $p_1, p_2$ have two endpoints in $X_1, X_2$
respectively. The proof is analogous to the above one. We leave it
to the interested reader.
\end{proof}

\textbf{Inductive Assumption}: Assume that Proposition
\ref{admissible} holds for any $(D_0, \lambda,c,\nu,
\tau)$-admissible path $\gamma$ with $k\le n$ contracting subsets
$X_i \in \mathbb X$. Any geodesic between $\gamma_-, \gamma_+$ is a
$R$-fellow traveller for $\gamma$. Thus, $\gamma$ is a
$(\Lambda,0)$-quasigeodesic, where $\Lambda = \Lambda(\lambda, c,
\nu, \tau)$ is given by Corollary \ref{maincor}.

We consider an admissible path $\gamma = p_0 q_1 p_1 \ldots q_n
p_n$, with $(n+1)$ contracting subsets $\mathbb Z=\{X_i \in \mathbb
X: 0 \le i \le n\}$.

The next lemma, compared with Lemma \ref{neartarget}, shows that the
subpath concatenated by $\{p_i, q_i: i < k-1\}$ of $\gamma$ is far
from $X_k$.
\begin{lem}\label{fartarget}
Let $X_k$ $(0 <k < n)$ be a contracting subset for $\gamma$. Denote
$\beta = [(p_0)_-,(q_{k-1})_+]_\gamma$. Then the following holds
$$\beta \cap N_{R+ \sigma(\mu(1,0))}(X_k) = \emptyset.$$
\end{lem}
\begin{proof}
Suppose, to the contrary, that $\beta \cap N_{R+
\sigma(\mu(1,0))}(X_k) \ne \emptyset$. Let $z$ be the last point on
$\beta$ such that $d(z, X_k) \leq R + \sigma_{\mu(1,0)}$, and
project $z$ to $w$ on $X_k$.

Observe that $\hat \beta = \beta  p_{k-1} q_k [(p_k)_-, w]$ is a
$(D_0, \lambda,c)$-admissible path with at most $n$ contracting
subsets, as $k < n$. By Inductive Assumption, $\hat \beta$ is a
$(\Lambda, 0)$-quasigeodesic. It follows that
$$\len([z, w]_{\hat \beta}) < \Lambda(R + \sigma_{\mu(1,0)}).$$ This
gives a contradiction with $\len (p_{k-1})
> D$, as it is assumed that
\begin{equation}\label{D4}
D > \Lambda(R  + \sigma_{\mu(1,0)}).
\end{equation}
Therefore,  $\beta$ has at least a distance $R + \sigma_{\mu(1,0)}$
to $X_k$.
\end{proof}

\begin{figure}[htb]
\centering \scalebox{0.7}{
\includegraphics{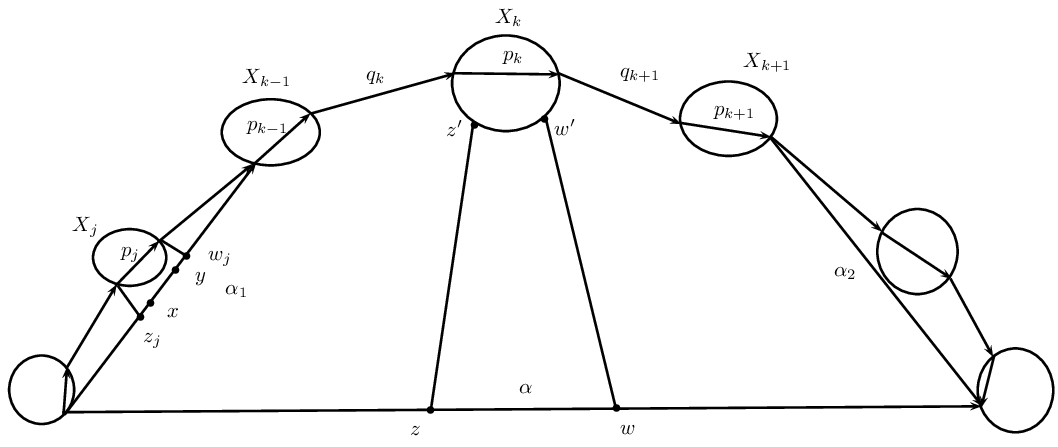} % Do not put file extension, only name
} \caption{Proof of Proposition \ref{admissible}} \label{fig:fig1}
\end{figure}

A key step in proving Proposition \ref{admissible} is the following.
\begin{lem}\label{Rnear}
Let $X_k$ be a contracting subset for $\gamma$, where $0 < k < n$.
Assume $\alpha$ is a geodesic such that $d(\gamma_-, \alpha_-) \le
\mu_{1, 0},\; d(\gamma_+, \alpha_+) \le \mu_{1, 0}$. Then  there
exist points $z, w \in \alpha \cap N_{\mu(1, 0)}(X_k)$ such that
$d(z, (p_k)_-) < R-1, \; d(w, (p_k)_+) < R-1$.
\end{lem}

\begin{proof}
Consider $\alpha_1 = [\gamma_-, (p_{k-1})_-]$, $\alpha_2 =
[(p_{k+1})_+, \gamma_+]$, $\beta_1 = [\gamma_-, (p_{k-1})_-]_\gamma$
and $\beta_2 = [(p_{k+1})_+, \gamma_+]_\gamma$. Note that $\alpha_1,
\alpha_2, \beta_1, \beta_2$ may be trivial.

We can apply Induction Assumption to $\beta_1$, as there are at most
$n$ contracting sets associated to $\beta_1$. Thus, $\alpha_1$ is a
$R$-fellow traveller for $\beta_1$. Let $z_j, w_j \in \alpha_1$,
$0\le j <k-1$ be the points given by Definition \ref{Fellow}. Then
$d(z_j, w_j) \ge 1$ and $d((p_j)_-, z_j) < R$, $d((p_j)_+, w_j) <
R$.

An additional difficulty arises here, as we want to get a uniform
bound in (\ref{L}), (\ref{R}) below. But using projections of
quasigeodesics $\beta_i$ to $X_k$ are not sufficient, because
firstly $\beta_i$ may not be in $\mathcal L$, and even this is true,
the projection will force us to increase $D, R$ in each step of
induction. So it is tempting to project the geodesics $\alpha_i$ to
$X_k$. For this purpose, we have the following Claim.

Let $x, y$ be the first and last points on $\alpha_1$ respectively
such that $x, y \in N_{\mu(1,0)}(X_k)$. See Figure \ref{fig:fig1}.

\begin{claim}
The segment $[x, y]_{\alpha_1}$ contains no points from $\{z_j, w_j:
0\le j < k-1\}$. Moreover, any geodesic segment  $[(p_j)_-, z_j]$
and $[(p_j)_+, w_j]$ have at least a distance $\mu_{1,0}$ to $X_k$.
\end{claim}
\begin{proof}[Proof of Claim]
Suppose, to the contrary, that there is a point, say $z_j$, from
$\{z_j, w_j: 0\le < j < k-1\}$ such that $z_j \in [x,
y]_{\alpha_1}$. Note that any point on $[x, y]_{\alpha_1}$ has at
most a distance $\sigma_{\mu(1,0)}$ to $X_k$. As $d(z_j, (p_j)_-) <
R$, we have $d((p_j)_-, X_k) < R + \sigma_{\mu(1,0)}$. This is a
contradiction, as by Lemma \ref{fartarget}, $\beta_1$ has at least a
distance $(R + \sigma_{\mu(1,0)})$ to $X_k$.

By the same argument, one can see that any geodesic segment
$[(p_j)_-, z_j]$ and $[(p_j)_+, w_j]$ have at least a distance
$\mu_{1,0}$ to $X_k$.
\end{proof}
By the Claim above, we assume that $x, y \in [z_j, w_j]_\alpha$ for
some $z_j, w_j \in \{z_j, w_j: 1 < j < k-1\}$. (The case that $x, y
\in [w_j, z_{j+1}]_\alpha$ is similar).

Using projection, we see that $\alpha \cap N_{\mu(1,0)} (X_k) \neq
\emptyset$. Suppose not. Projecting the right-hand segment of
$\gamma$ to $X_k$, we get
\begin{equation}\label{L}
\begin{array}{rl}
\diam{\proj_k([\gamma_-, (p_k)_-]_\gamma)} & <  \diam{\proj_k([\alpha_-, z_j]_{\alpha_1})} + \diam{\proj_k ([(p_j)_-, z_j])} \\
& + \diam{\proj_k (p_j)} + \diam{\proj_k [(p_j)_+, w_j]}  \\
& + \diam{\proj_k ([w_j, (\alpha_1)_+]_{\alpha_1})} + \diam {\proj_k (p_{k-1} q_k)}  \\
&< 5\epsilon_{1,0} + B_{\lambda,c}.
\end{array}
\end{equation}
The same estimation applies to the left-hand segment,
\begin{equation}\label{R}
\diam{\proj([(p_k)_+, \gamma_+]_\gamma)} < 5\epsilon_{1,0} +
B_{\lambda,c}.
\end{equation}
Since $[\alpha_-, \gamma_-]$ and $[\alpha_+, \gamma_+]$ are of
length at most $\mu_{1, 0}$, we have
\begin{equation}\label{M}
\diam{\proj_k([\alpha_-, \gamma_-])} \le \epsilon + 3 \mu_{1, 0}, \;
\diam{\proj_k([\alpha_+, \gamma_+])} \le \epsilon + 3 \mu_{1, 0}.
\end{equation}
By (\ref{L}), (\ref{M}) and (\ref{R}), the length of $p_k$ is
estimated as follows
$$\begin{array}{rl} \len(p_k) &< \diam{\proj_k([\gamma_-,
(p_k)_-]_\gamma)} + \diam{\proj_k([(p_k)_+, \gamma_+]_\gamma)} +
\diam{\proj_k(\alpha)} \\
& + \diam{\proj_k([\alpha_-, \gamma_-])} + \diam{\proj_k([\alpha_+, \gamma_+])} \\
& < 13\epsilon_{1,0} + 6\mu_{1, 0} + 2B_{\lambda,c}.
\end{array}$$
This gives a contradiction with $\len(p_k) > D$, as it is assumed
that
\begin{equation}\label{D5}
D > 13\epsilon_{1,0} + 6\mu_{1, 0} + 2B_{\lambda,c}.
\end{equation}

Hence, $\alpha \cap N_{\mu(1,0)} (X_k) \neq \emptyset$. Let $z$ be
the first point of $\alpha$ such that $d(z, X_k) \leq \mu_{1,0}$,and
$z'$ a projection point of $z$ to $X_k$. Thus
$$
\begin{array}{rl}
d(z, (p_k)_-) &< d(z,z') + d(z', (p_k)_-) \\
&< \mu_{1,0} + 5\epsilon_{1,0} + B_{\lambda,c} < R -1,
\end{array}$$
as it is assumed that
\begin{equation}\label{R3}
R > \mu_{1,0} + 5\epsilon_{1,0} + B_{\lambda,c} +1.
\end{equation}

Let $w$ be the last point of $\alpha$ such that $d(z, X_k) \leq
\mu_{1,0}$. Arguing with the same line for the point $z$, we see
that $d(w, (p_k)_+) < R -1$. This completes the proof.
\end{proof}

We are now in a position to finish the proof of Proposition
\ref{admissible}, with repeated applications of Lemma \ref{Rnear}.

\begin{proof}[Proof of Proposition \ref{admissible}]
Recall that $\gamma$ is a $(D, \lambda, c, \nu, \tau)$-admissible
path with at most $(n+1)$ contracting subsets. Consider a geodesic
$\alpha$ with $\alpha_- = \gamma_-$ and $\alpha_+ = \gamma_+$. By
Lemma \ref{basestep}, we can assume that $n \ge 2$.

Consider a contracting subset $X_k$ for $\gamma$, where $0 < k < n$.
By Lemma \ref{Rnear}, there exist $z_k, w_k \in \alpha \cap
N_{\mu(1, 0)}$ such that $d(z_k, (p_k)_-) < R-1,\; d(w_k, (p_k)_+) <
R-1$.

Let $w_k'$ be a projection point of $w_k$ to $X_k$. We consider $j =
k + 1$. Denote $\gamma' = [w_k', (p_k)_+][(p_k)_+, \gamma_+]_\gamma$
and $\alpha' = [w_k, \alpha_+]_\alpha$.

Observe that $\gamma'$ is a $(D, \lambda, c, \nu, \tau)$-admissible
path with at most $n$ contracting subsets. Since $d(w_k, w_k') \le
\mu_{1, 0}$, we apply Lemma \ref{Rnear} to $\gamma'$ and $\alpha'$.
Then there exist points $z_j, w_j \in \alpha' \cap
N_{\mu(1,0)}(X_j)$ such that $d(z_j, (p_j)_-) <R-1, d(w_j, (p_j)_+)
< R-1$.

Continuously increasing or decreasing $k$, the points $z_j, w_j$ on
$\alpha$ are obtained to satisfy $d(z_j, (p_j)_-) <R-1,\; d(w_j,
(p_j)_+) < R-1$ for all $0 \le j \le n$.

Hence, up to a slight modification of $z_j, w_j$, one sees that
$$d(z_j, w_j)\ge 1,\; d(z_j, (p_j)_-) <R, \; d(w_j, (p_j)_+) < R.$$
So, $\alpha$ is a $R$-fellow traveller, concluding the proof.
\end{proof}

The following Corollary will be useful in Section 4.
\begin{cor}\label{bddproj}
Let $X_k$ be a contracting subset for $\gamma$, where $0 \le k \le
n$. Denote $\beta =[\gamma_-, (p_k)_-]_\gamma$. Then
$\diam{\proj_k(\beta)} < 5\epsilon_{1,0} + B_{\lambda,c}$.
\end{cor}
\begin{proof}
In order to use Induction Assumption, we limit $0< k < n$ in the
proof of Lemma \ref{Rnear}.  By Proposition \ref{admissible}, it is
clear that the same argument works for every $0\le k\le n$. Thus the
conclusion follows.
\end{proof}

\section{Groups with a contracting subgroup}

Assume that $G$ acts properly on $(Y, d)$.  A subgroup $H$ in $G$ is
called \textit{contracting} if for some (hence any) $o \in Y$, the
subset $Ho$ is contracting in $Y$. $H$ is called \textit{strongly
contracting} if for some (hence any) $o \in Y$, the collection
$\{gHo: g\in G\}$ is a contracting system with bounded intersection
in $Y$.

An element $h \in G$ is called (resp, \textit{strongly})
\textit{contracting} if the subgroup $\langle h \rangle$ is (resp.
strongly) contracting.

We describe a typical situation which gives arise to a contracting
system with bounded intersection (and projection) property. The
proof is straightforward and left to the reader.
\begin{lem}\label{contractingsystem}
Suppose that a group $G$ acts properly on a geodesic metric space
$(Y, d)$ with a $G$-invariant contracting system $\mathbb X$. Assume
that $\sharp (\mathbb X/G) < \infty$ and for each $X \in \mathbb X$
the stabilizer $G_X$ of $X$ in $G$ acts cocompactly on $X$. If the
following holds
\begin{equation}\label{bddint}
\diam{N_r(X) \cap N_r(X')} < \infty
\end{equation}
for any $r>0$ and all $X \neq X' \in \mathbb X$, then $\mathbb X$
has bounded projection property.
\end{lem}
\begin{comment}
\begin{proof}
Fix $Z \in \mathbb X$. Observe that for any $r >0$, there are only
finitely many $G_{Z}$-orbits of $X \in \mathbb X$ such that $N_r(Z)
\cap X \neq \emptyset$. This follows from the facts that $G \act X$
is proper and $G_Z \act Z$ is cocompact.

For $X \in \mathbb X$ with $d(X, Z) >\mu+\sigma$, we see that
$\diam{\proj_{Z} (X)}\le 2\epsilon$. By the observation above and
the assumption,  the conclusion follows for all $X \in \mathbb X$.
\end{proof}

\begin{rem}
???The proper action of $G$ on $Y$ is necessary. However, if the
action of $G$ on $Y$ is assumed to be acylindrical, then the
condition (\ref{bddint}) can be dropped.
\end{rem}
\end{comment}

A contracting subgroup always produces a strongly contracting
subgroup. Let $H$ be contracting. Consider a group $E(H)$ defined as
follows:
$$E(H)=\{g: \exists r >0, \diam{N_r(gHo) \cap N_r(Ho)} = \infty \}.$$
We note the following facts.
\begin{lem}\label{maxelem}
$[E(H): H]< \infty$. In particular, $E(H)$ is strongly contracting.
\end{lem}
\begin{proof}
In definition of $E(H)$, it is easy to see that $r>0$ can be made
uniform for all $g$ by contracting property. Since $G$ acts properly
on $Y$, we then deduce that $H$ is of finite index in $E(H)$. By
Lemma \ref{contractingsystem}, we see that  $E(H)$ is contracting.
\end{proof}

\begin{lem}\label{samecoset}
If $gE(H)o=g'E(H)o$ for some $g, g' \in G$, then $gE(H)=g'E(H)$.
\end{lem}
\begin{proof}
By Lemma \ref{maxelem}, there exists a constant $D>$ such that
$$gE(H)o\subset N_D(gHo),\; g'E(H)o\subset N_D(g'Ho).$$ This proves
that $g^{-1}g' \in E(H)$.
\end{proof}

Before going on, we discuss a notion of free products of sets
defined in \cite{DPPS}. For a subset $A$ and an element $h$ in $G$,
the \textit{free product} $\mathcal W(A, h)$ of sets $A$ and $h$ is
the set of all words $W$ in the alphabet $A \cup \{h\}$ such that
exactly one of two consecutive letters in $W$ is either $h$ or in
the set $A$. Let $$\kappa: \mathcal W(A, h) \to G$$ be the
evaluation map with its image in $G$ denoted by $A \star h$.

Take a word $W \in\mathcal W(A, h)$ to which we associate a normal
path as follows. Denote by $W_i$ the initial subword of $W$ of
length $i$ for $1 \le i\le \len(W)$. For a basepoint $o\in Y$, set
$o(W):=\{o_i: o_i=\kappa(W_i)o, 1 \le i\le \len(W), o_0=o\}$ in $Y$.
The \textit{normal path} of $W$ is thus the concatenation of
geodesics connecting $o_i, o_{i+1}$ for $0\le i < \len(W)$.

A contracting element by definition gives a contracting subgroup,
however, in which elements might not be contracting. For example, a
maximal parabolic subgroup in a relatively hyperbolic group usually
contains non-contracting elements. The following technical lemma
gives a way to construct contracting elements in a free product of a
given contracting subgroup and some element.

\begin{lem}\label{manycontr}
Suppose $G$ acts properly on $(Y, d)$ with a contracting subgroup
$H$. For any $k \in G\setminus E(H)$ and $o\in Y$, there exists
$D=D(k, o)>0$ with the following property. Choose a finite set $F
\subset E(H)$ such that $d(o, fo)>D$ for all $f\in F$. Then for any
$W\in \mathcal W(F, k)$, the set $o(W)$ is contracting with
contracting constant depending on $F$.
\end{lem}

\begin{proof}
Since $E(H)$ is strongly contracting, $\mathbb X=\{gE(H)o: g\in G\}$
is $(\mu, \epsilon)$-contracting with $\nu$-bounded intersection for
some fixed $\mu, \epsilon, \nu$. Let $D=\min\{d(o, fo: f\in F)\}$.

Assume that $W=h_0 k h_1 k \cdots h_n k\in \mathcal W(F, k)$ for
$n>0, h_i\in F$. Consider the normal path $\gamma=p_0q_1p_1\ldots
p_{n-1} q_n$, where $p_i$ are geodesics corresponding to $h_i$ and
$q_i$ geodesics corresponding to $k$. Each $p_i$ has two endpoints
in some $X_i \in \mathbb X$. By a translation of $q_i, q_{i+1}, X_i$
to a standard position, we see that there exists an uniform number
$\tau>0$ such that $q_i$ and $q_{i+1}$ have $\tau$-bounded
projection to $X_i$. Hence, $\gamma$ is a $(D,1,0,\nu, \tau)$
admissible path. Let $D_0=D(1, 0,\nu,\tau), \Lambda=\Lambda(1,
0,\nu,\tau)$ given by Proposition \ref{admissible}. By choosing
$D>D_0$, we obtain that $\gamma$ is a $(\Lambda, 0)$-quasigeodesic.

Denote $A=o(W)$. Let $\alpha$ be a geodesic such that $\alpha \cap
N_{\epsilon(1,0)}(A) =\emptyset$. Let $z, w \in A$ be projection
points of $\alpha_-, \alpha_+$ to $A$. Without loss of generality,
assume that $d(z,w)=\diam{\proj_A(\alpha)}$. Construct a normal path
$\gamma$ between $z$ and $w$.

To prove the contracting property of $A$, we shall show that
$\gamma$ contains at most two geodesics from $\{p_i\}$. Clearly,
this implies that $A$ is $(\epsilon_{1,0}, D_1)$-contracting, where
$D_1=3d(o, ko)+2\max\{d(o, fo): f\in F\}$.

Suppose to the contrary that $\gamma$ contains three distinct $p_i,
p_j, p_k$ in a natural order. Thus $[z,\alpha_-]\cap
N_{\epsilon(1,0)}(X_j)=\emptyset$. Indeed, if not, let $x\in [z,
\alpha_-]$ be the first point such that $x\in
N_{\epsilon(1,0)}(X_j)$, and $x'\in X_j$ a projection point of $x$
to $X_j$. By Corollary \ref{bddproj}, we use projection to see that
\begin{equation}\label{P1}
d((p_j)_-, x) <\diam{\proj_{X_j}([z,
(p_j)_-]_\gamma)}+\diam{\proj_{X_j}([z, x])}+d(x,x')<6\epsilon_{1,0}
+\mu_{1,0}+ B_{1,0}.
\end{equation}

Note that $\gamma$ is a $(\Lambda, 0)$-quasigeodesic, and contains
$p_i$. Choose further $D>2\Lambda(6\epsilon_{1,0} +\mu_{1,0}+
B_{1,0})$. It follows that
\begin{equation}\label{P2}
d(z, (p_j)_-)>\Lambda^{-1}d(o, ho)> 2(6\epsilon_{1,0} +\mu_{1,0}+
B_{1,0}).
\end{equation}
Combining (\ref{P1}) and (\ref{P2}), we get the following
$$d(x, z)>d(z, (p_j)_-) - d((p_j)_-, x)>d((p_j)_-, x).$$ As
$z$ is a projection point from $\alpha$ to $A$ and $(p_j)_- \in A$,
this gives a contradiction.

By the same reasoning, we see that $\alpha \cap N_{\epsilon(0,
1)}(X_j)=\emptyset$. However, if we project $[z, \alpha_-], \alpha,
[w, \alpha_+]$ to $X_j$, we have by Corollary \ref{bddproj} that
$$\len(p_j) < 3\mu_{1,0})+\diam{\proj_{X_j}([z, (p_j)_-]_\gamma)} +
\diam{\proj_{X_j}([w, (p_j)_+]_\gamma)} <10\epsilon_{1,0}
+5\mu_{1,0}+ 2B_{1,0}.$$ We would get a contradiction by setting
$D>10\epsilon_{1,0} +5\mu_{1,0}+ 2B_{1,0}$. Therefore, $\gamma$
contains at most two geodesics from $\{p_i\}$. The proof is thus
complete.
\end{proof}

\begin{cor}\label{manycontr}
Suppose $G$ acts properly on $(Y, d)$ with a contracting subgroup
$H$. Consider $k \in G\setminus E(H)$ and $o\in Y$. Then there
exists $D=D(k, o)>0$ such that for any $h\in E(H)$ with $d(o, ho)
>D$ the element $hk$ is contracting in $G$.
\end{cor}

\begin{lem}\label{normal}
Suppose a non-elementary group $G$ acts properly on $(X, d)$ with a
contracting element. Let $\Gamma$ be an infinite normal subgroup.
Then $\Gamma$ contains infinitely many contracting elements.
\end{lem}

\begin{proof}
Let $h$ be a contracting element in $G$. Suppose that $h \notin
\Gamma$.  As $\Gamma$ is infinite and $G$ is non-elementary, there
exists infinitely many $k \in \Gamma\setminus E(h)$. Indeed, if not,
assume that $\Gamma \subset E(h)\cdot F$ for a finite set $F \subset
G$. As $G$ is non-elementary, there exists $g \notin E(h)$. Clearly,
$\Gamma=g\Gamma g^{-1} \subset gE(h)Fg^{-1}$. By Lemma
\ref{maxelem}, $gE(h)o$ and $E(h)o$ have bounded intersection.
However, as $\Gamma$ is infinite, we arrive at a contradiction.
Thus, there exists infinitely many $k \in \Gamma\setminus E(h)$.

Fix some $k \in \Gamma\setminus E(h)$. By Lemma \ref{manycontr},
$h^nkh^{-n}k$ is contracting for all sufficiently large $n$.
\end{proof}

\begin{cor}
Existence of an contracting element is inherited by an infinite
normal subgroup of a non-elementary group.
\end{cor}

\section{A critical gap criterion}
This section shall present a critical gap criterion, due to
Dalbo-Peigne-Picaud-Sambusetti, originally stated for Kleinian
groups.  For our purpose, their criterion is formulated in a general
manner.

\subsection{Poincar\'e series}
\begin{comment}For each integer $n\ge 0$, define
$$
B(n) = \{ g\in G:  d(o,  g o)\le n\}.
$$
$$
A(n, \Delta) = \{g\in G: |d( o, g o)-n|\le \Delta\}.
$$
\end{comment}
Suppose $G$ acts properly on a geodesic metric space $(Y, d)$.  Fix
a basepoint $o\in Y$. For any subset $X \subset G$, we can introduce
the following series called Poincar\'e series,
$$
\PS{X} = \sum\limits_{g \in X} \exp(-s\cdot d(o, go)), s\ge 0.
$$
The \textit{critical exponent} $\delta_X$ of $\PS{X}$ is the limit
superior $$\limsup\limits_{n \to \infty} n^{-1} \ln \sharp (B(o, n)
\cap X),$$ which can be thought of the exponential growth rate of
the orbit $Xo$. In particular, for a finite generating set $S$, this
number $\delta_G$ coincides with the usual exponential rate of $G$
relative to $S$, where $G$ acts on its Cayley graph.

\begin{comment}
Note that the series $\PS{G}$ is convergent for $s
> \delta_{G}$ and divergent for $s < \delta_{G}$.
Clearly, $\PS{G}$ is, up to a multiple constant depending on
$\Delta$,  the same as the series
$$
\sum\limits_{n\ge 0} A(n, \Delta) \exp(-sn).
$$
for any fixed $\Delta >0$.
\end{comment}
The following observation in \cite{DPPS} enables us to consider
growth rate of a net in $G$ rather than the group itself.

\begin{lem}\label{convradius}
Let $A$ be a subset in $G$ such that $N_R(Ao) = Go$ for $R>0$. Then
$\delta_{G} = \delta_{A}$.
\end{lem}
\begin{comment}
\begin{proof}
It suffices to note the following inequality
$$
\sum\limits_{a \in A} \exp(-s\cdot d_S(1, a)) \le \sum\limits_{g \in
G} \exp(-s\cdot d_S(1, g)) \le M \exp(sR) \sum\limits_{a \in A}
\exp(-s\cdot d_S(1, a)),
$$
where $M$ denotes the cardinality of the ball of radius $R$.
\end{proof}
\end{comment}

Let $\Gamma$ be a normal subgroup and $\bar Y=Y/\Gamma$ endowed with
the quotient metric $\bar d$. Define $\bar d(\bar x, \bar y) =
d(\Gamma x, \Gamma y)$ for any $\bar x, \bar y \in \bar Y$.

Consider the quotient $\bar G = G/\Gamma$, and the epimorphism $\pi:
G \to \bar G$. Clearly, $\bar G$ acts properly and isometrically on
$\bar X$ by $g\Gamma \cdot \Gamma o = \Gamma go$. Thus, we can also
consider the Poincar\'e series $\PS{\bar G}$ for the action $\bar G$
on $\bar X$, with critical exponent $\g{\bar G}$.

Recall that $G \act Y$ is \textit{geometric} if $G$ acts properly
and cocompactly on $Y$. The following is also essentially proven in
\cite{DPPS}.
\begin{lem} \label{divquotient}
Suppose $G$ acts geometrically on a geodesic metric space $(Y, d)$.
Then for any quotient $\bar G$, the Poincar\'e series $\PS{\bar G}$
is divergent at $s=\g{\bar G}$.
\end{lem}
\begin{proof}
This is proven exactly by the argument in \cite[Proposition
4.1]{DPPS} for the convex-compact case, where the only condition
used is the cocompact action of $G$ on $Y$.
\end{proof}

\subsection{A critical gap criterion}
In \cite{DPPS}, Dalbo-Peigne-Picaud-Sambusetti introduced the
following critical gap criterion in terms of free product of sets.
%Since it plays an important role in proving Theorem
%\ref{tightfloyd} and its proof is rather short, we include their
%proof for the completeness.
\begin{lem}\label{degrowth}
Let $A$ be a subset in $G$ and $h$ be a non-trivial element in $G
\setminus A$. Assume that $\kappa: \mathcal W(A, h) \to G$ is
injective. If the series $\PS{A}$ is divergent at $s=\delta_{A}$,
then $\delta_{A \star h}
> \delta_{A, S}$.
\end{lem}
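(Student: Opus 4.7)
The plan is to lower-bound $\PS{A \star h, d_S}$ by a geometric series whose common ratio $\exp(-s\cdot d_S(1,h))\cdot\PS{A, d_S}$ can be forced above $1$ for some $s > \delta_{A,S}$; this will immediately give $\delta_{A \star h, S} \ge s > \delta_{A, S}$. The construction is the natural free-product-style one, and the divergence hypothesis enters exactly to ensure that the ratio crosses $1$ strictly to the right of the critical line.

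I would restrict attention to the subfamily of $\mathcal W(A,h)$ consisting of alternating words
\[
W_n(a_1,\ldots,a_n) \;=\; h\, a_1\, h\, a_2 \cdots h\, a_n\, h, \qquad a_i \in A,\ n \ge 0.
\]
These lie in $\mathcal W(A,h)$, and by injectivity of $\kappa$ distinct tuples (and distinct $n$) produce distinct elements of $A \star h$. The triangle inequality yields $d_S(1, \kappa(W_n)) \le (n+1)\, d_S(1,h) + \sum_{i=1}^{n} d_S(1,a_i)$. Writing $P(s) = \PS{A, d_S}$ and $h_s = \exp(-s\, d_S(1,h))$, summing $\exp(-s\, d_S(1,\cdot))$ over all $W_n$ and over $n$ gives
\[
\PS{A \star h, d_S} \;\ge\; h_s\sum_{n=0}^{\infty}\bigl(h_s\, P(s)\bigr)^{n}.
\]

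To conclude, note that $P(s)$ is continuous and strictly decreasing on $(\delta_{A,S}, \infty)$, while by hypothesis $P$ diverges at $s = \delta_{A,S}$, so $\lim_{s \to \delta_{A,S}^{+}} P(s) = +\infty$. Since $h_s$ is continuous and bounded away from $0$ near $\delta_{A,S}$, the product $h_s P(s)$ tends to $+\infty$ as $s \to \delta_{A,S}^{+}$. Choose $s_0 > \delta_{A,S}$ with $h_{s_0}\, P(s_0) \ge 1$; then the geometric series above diverges at $s_0$, whence $\PS{A \star h, d_S}(s_0) = +\infty$ and therefore $\delta_{A \star h, S} \ge s_0 > \delta_{A, S}$.

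I do not anticipate a substantive obstacle: the argument is essentially a single triangle-inequality estimate followed by a boundary-behaviour observation for $P(s)$. The only delicate point is recognising that the divergence hypothesis is indispensable; without it, $h_s P(s)$ could remain strictly below $1$ throughout $(\delta_{A,S}, \infty)$, and the geometric bound would fail to yield a strict gap.
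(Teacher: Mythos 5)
Your proposal is correct and follows essentially the same route as the paper's proof: both lower-bound the Poincar\'e series of $A \star h$ by a geometric series built from alternating words (you use $h a_1 h \cdots a_n h$, the paper $a_1 h \cdots a_n h$), invoke injectivity of $\kappa$ to keep the terms distinct, and use divergence of $\PS{A,d_S}$ at $s=\delta_{A,S}$ to push the common ratio above $1$ at some $s>\delta_{A,S}$. The only cosmetic difference is that you factor the ratio as $\exp(-s\,d_S(1,h))\PS{A,d_S}$ via one extra triangle inequality, while the paper keeps it as $\sum_{a\in A}\exp(-s\,d_S(1,ah))$.
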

%\begin{proof}
%We estimate the Poincar\'e series of $A \star h$ as follows:
%$$
%\begin{array}{rl}
%\sum\limits_{g \in A \star h} \exp(-s\cdot d_S(1, g)) \ge & \sum\limits_{n =1}^{\infty} \sum\limits_{a_1, \ldots, a_n \in A} \exp(-s\cdot d_S(1, %a_1 h \cdots a_n h)) \\
%\ge & \sum\limits_{n =1}^{\infty}\sum\limits_{a_1, \ldots, a_n \in A} \exp(-s\cdot d_S(1, a_1h_1))\cdots \exp(-s\cdot d_S(1, a_nh_n)) \\
%\ge &  \sum\limits_{n =1}^{\infty} (\sum\limits_{a \in A}
%\exp(-s\cdot d_S(1, ah)))^n.
%\end{array}
%$$
%Since it is assumed that $\sum\limits_{a \in A} \exp(-s\cdot d_S(1,
%a h)) = \infty$ for $s= \delta_A$. Then there exists some $s >
%\delta_{A, S}$ such that $\sum\limits_{a \in A} \exp(-s\cdot d_S(1,
%a h))
%> 1$. Hence we have $\delta_{A \star h, S} > \delta_{A, S}$.
%\end{proof}

In the sequel, the subset $A$ shall come from (a subset of) the
image of a section map $\iota: \bar G \to G$ defined as follows. For
each $\bar g \in \bar G$, choose an element $g \in \pi^{-1}(\bar g)
= g\Gamma$ such that $d(go, o) = d(go, \Gamma o)$. Let $\iota(\bar
g) = g$.

Without loss of generality, we can assume that $\iota(\bar g^{-1}) =
\iota(\bar g)^{-1}$. In fact, let $g \in \pi^{-1}(\bar g) = g\Gamma$
such that $d(go, o) = d(go, \Gamma o)$. It suffices to show that
$d(g^{-1}o, o) = d(g^{-1}o, \Gamma o)$. This is obvious.

Denote by $G_\Gamma =\iota(\bar G)$ the symmetric image. As $d(go,
o) = \bar d(\bar g \bar o, \bar o)$, we see the relation $$\PS{\bar
G} = \PS{G_\Gamma}.$$ Hence by Lemma \ref{divquotient}, the strategy
in proving Theorem \ref{tight} is to construct inside $G$ a free
product of $G_\Gamma$ with an element $h$. The element $h$ will be
chosen to be a contracting element in $\Gamma$.

\section{Proof of Theorem \ref{tight}}
Suppose $G$ acts geometrically on a geodesic metric space $(Y, d)$.
Let $\Gamma$ be a normal subgroup in $G$. Choose a symmetric set
$G_\Gamma \le G$ consisting of minimal representatives in $\bar G =
G /\Gamma$ as in Section 5.

Let $h$ be a contracting element in $\Gamma$. (Such $h$ always
exists by Lemma \ref{normal} when $\Gamma$ is infinite.) By Lemma
\ref{maxelem}, there exists a subgroup $E(h):=E(\langle h\rangle)$
in $G$ which contains $\langle h\rangle$ as a finite index subgroup.

Choose a basepoint $o$ in $Y$ and consider $C(h)=E(h)o$. By
definition of a contracting element,  there are $\mu, \epsilon
> 0$ such that $C(h)$ is a contracting set. Moreover, there is some
function $\nu: \mathbb R \to \mathbb R$ such that $\mathbb X = \{g
C(h): g \in G \}$ is a $(\mu, \epsilon)$-contracting system with
$\nu$-bounded intersection. By Lemma \ref{equivalence}, $\mathbb X$
has $B$-bounded projection for some $B>0$.

\subsection{Constructing a free product of subsets in $G$}
Given $A \subset G_\Gamma$ and $n > 0$, consider a free product
$\mathcal W(A, h^n)$ of sets $A$ and $h^n$.

Let $W = a_1 h^n a_2 h^n \ldots a_k h^n \in \mathcal W(A, h^n)$ for
$a_i \in A$. Fix a basepoint $o\in Y$ and consider its normal path
$\gamma = p_1 q_1 \ldots p_k q_k$, where $p_i$ are geodesics
corresponding to $a_i$, and $q_i$ geodesics corresponding to $h^n$.

We shall prove that every normal path is, in fact, an admissible
path. First of all, we need the following observation.

\begin{lem}\label{orth}
There exists a constant $\tau = \tau(h) > 0$ such that the following
holds:

Given $g \in G_\Gamma$, any geodesic between $o$ and $go$ has
bounded projection $\tau$ to the set $C(h)$.
\end{lem}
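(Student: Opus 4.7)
The plan is to argue by contradiction, exploiting the minimality of $g$ in its coset $g\Gamma$. Write $X = C(h_-, h_+)$, and suppose for some $g \in G_\Gamma$ that the diameter of $\proj_X([o, go])$ exceeds a constant $\tau = \tau(h)$ to be chosen below. Since $o \in X$ is its own projection, the diameter bound produces a point $y \in [o, go]$ with a projection $p \in \proj_X(y)$ satisfying $d(o, p) > \tau/2$. Applying Lemma \ref{thin} with $x = o$ then places $p$ within distance $\sigma$ of some $y' \in [o, go]$.

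By Lemma \ref{hyperbolic}(1), $E(h)$ acts cocompactly on $X$ and contains $\langle h \rangle$ as a finite index subgroup, so the orbit $\langle h \rangle \cdot o$ is $K$-dense in $X$ for some $K = K(h)$. Choosing $k \in \mathbb{Z}$ with $d(p, h^k o) \le K$ yields $d(y', h^k o) \le K + \sigma$. Since $y'$ lies on the geodesic $[o, go]$, combining $d(h^k o, go) \le d(h^k o, y') + d(y', go)$ with $d(o, y') \ge d(o, p) - \sigma > \tau/2 - \sigma$ gives
$$d(o, go) - d(h^k o, go) > \tau/2 - K - 2\sigma.$$

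To close the argument, I translate this into a word-length inequality. Left-invariance gives $d(h^k o, go) = d(o, h^{-k} g o)$, and since $d(o, x o) = |o^{-1} x o|_S$ differs from $|x|_S$ by at most $2|o|_S$ for any $x \in G$, one obtains
$$|g|_S - |h^{-k} g|_S > \tau/2 - K - 2\sigma - 4|o|_S.$$
As $h \in \Gamma$ and $\Gamma$ is normal, $h^{-k} g \in g\Gamma$; choosing $\tau > 2K + 4\sigma + 8|o|_S$ then makes $h^{-k} g$ a strictly shorter representative of $g\Gamma$, contradicting $g \in G_\Gamma$. The main anticipated obstacle is precisely this final bookkeeping step: because $o$ is not assumed to be $1$, the $O(|o|_S)$ discrepancy between geometric distance and word length must be absorbed into $\tau$. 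Since $\sigma$, $K$, and $|o|_S$ all depend only on $h$, this produces a constant $\tau = \tau(h)$ of the required form.
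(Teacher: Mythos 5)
Your proof is correct and follows essentially the same route as the paper's: approximate a far projection point by a point $h^k o$ of the cocompact $\langle h\rangle$-orbit (Lemma \ref{hyperbolic}) lying near the geodesic (Lemma \ref{thin}), and then contradict the minimality of $g$ as a representative of its coset, since $h^{-k}g \in g\Gamma$. If anything, you are slightly more careful than the paper on two points it glosses over: you bound the projection of the whole geodesic rather than only that of the endpoint $go$, and you explicitly absorb the $O(|o|_S)$ discrepancy between the basepoint $o$ and the identity into the choice of $\tau$.
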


\begin{proof}
We project $go$ to a point  $z \in C(h)$. By Lemma \ref{thin}, there
is a constant $\sigma_1 > 0$ such that $d(z, [o, go]) < \sigma_1$.
Choose $w \in [o, go]$ such that $d(z, w) < \sigma_1$.

Since $\langle h \rangle$ is of finite index in $E(h)$, there exists
a constant $\sigma_2>0$ such that $C(h) \subset N_{\sigma_2}(\langle
h \rangle o)$. Thus, there exists $h^i \in \langle h\rangle$ such
that $d(h^io, z) \le \sigma_2$.

Observe that $d(w, o) \le d(w, z)+\sigma_2$. Indeed, suppose to the
contrary that $d(w, o) > d(w, z)+\sigma_2$. Then $d(go, o) = d(go,
w) + d(w, o) > d(go, z)+\sigma_2>d(go, h^io).$ This gives a
contradiction, as $d(o, go) = d(\Gamma o, go)$.

Hence $$d(o, z) \le d(o, w) + d(w,z) \le 2\sigma_1+\sigma_2.$$
Setting $\tau=4\sigma_1+2\sigma_2$ finishes the proof.
\end{proof}

We are now able to prove the following.

\begin{lem}\label{normalpath}
Let $W = a_1 h^n a_2 h^n \ldots a_k h^n$ be a word in $\mathcal W(A,
h^n)$, and $\gamma = p_1 q_1 \ldots p_k q_k$ the normal path of $W$
in $Y$. Then $\gamma$ is a $(D, 1, 0, \nu, \tau)$-admissible path,
where $D=d(o, h^no)$.
\end{lem}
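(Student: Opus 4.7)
The plan is to verify the three conditions of Definition 2.5 directly for $\gamma = p_1 q_1 \cdots p_k q_k$. In the notation of that definition, the $q_i$'s (labeled by $h^n$) will play the role of the ``contracting'' segments whose endpoints lie in some $X_i \in \mathbb{X}$, while the $p_i$'s (labeled by $a_i \in A \subset G_\Gamma$) will play the role of the $\tau$-orthogonal bridges.

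First I would identify the associated contracting subset $X_i \in \mathbb X$ for each $q_i$. Writing the endpoints of $q_i$ as $v_i$ and $v_i h^n$, the fact that $\langle h\rangle \le E(h)$ preserves $C(h_-, h_+)$ under left multiplication, combined with $o \in C(h_-, h_+)$, produces a single left translate $X_i$ of $C(h_-, h_+)$ containing both endpoints. This verifies condition (1). For condition (2), since $q_i$ is a geodesic between its endpoints, left-invariance of the word metric gives $\ell(q_i) = d(v_i, v_i h^n) = d(1, h^n) = D$, so every interior $q_i$ meets the required length.

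The main step will be condition (3): the $\tau$-orthogonality of each $p_i$ to both adjacent contracting sets $X_{i-1}$ and $X_i$. Here I would translate by an appropriate element of $G$ (acting as isometries on $\Gx$) to bring the endpoints of $p_i$ into the standard form $\{o, a_i o\}$. Under the same translation, the target $X_i$ becomes $a_i C(h_-, h_+)$ (and $X_{i-1}$ becomes $C(h_-, h_+)$ after a further translation by a power of $h$, using that $\langle h\rangle$ stabilizes $C(h_-,h_+)$). A direct application of Lemma \ref{orth}, together with the symmetry of $G_\Gamma$ (so that $a_i^{-1}$ is again a minimal representative and eligible for the lemma), then furnishes the bound $\tau$. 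The endpoint geodesics $p_1$ and $q_k$ require only a one-sided orthogonality check and no length bound, respectively.

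The main obstacle is the bookkeeping: correctly identifying which left translate $g C(h_-, h_+)$ contains the endpoints of each $q_i$, and then tracking how this translate transforms under the isometric translations used to invoke Lemma \ref{orth}. Once this has been unpacked, each of the three admissibility conditions reduces to a direct application of results already established.
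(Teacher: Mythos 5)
Your proposal is correct and follows essentially the same route as the paper: the contracting set for $q_i$ is the prefix translate $X_i = a_1h^n a_2 h^n\cdots a_i\,C(h_-,h_+)$, and the $\tau$-orthogonality of each $p_i$ is obtained by translating it to a geodesic between $o$ and $a_i^{\pm 1}o$ and invoking Lemma \ref{orth} together with the symmetry of $G_\Gamma$, exactly as in the paper's proof. The only caveat is the bookkeeping slip you yourself anticipate: under the basepoint convention the endpoints of $q_i$ are $uo$ and $uh^n o$ (with $u$ the prefix $a_1h^n\cdots a_i$), not $v_i$ and $v_ih^n$, and it is precisely this left-translate form that places both endpoints in $u\,C(h_-,h_+)$.
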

\begin{proof}
Observe that each geodesic $q_i$ has both endpoints in a contracting
subset $$X_i = a_1 h^n a_2 h^n \ldots a_i \cdot C(h) \in \mathbb
X.$$ Clearly, as $\mathbb X$ has bounded projection, it suffices to
prove Condition (3) in Definition \ref{AdmDef}. Namely, we shall
show that each $p_i$($1 < i < k$) has $\tau$-bounded projection to
$X_i$, where $\tau$ is given by Lemma \ref{orth}.  The case that
$p_i$ is $\tau$-orthogonal to $X_{i-1}$(if defined) is similar.

Translate $X_i$ to $C(h)$ by $a_i^{-1} h^{-n} a_{i-1}^{-1} h^{-n}
\ldots a_1^{-1}$. Correspondingly, $p_i$ is translated to a geodesic
between $(a_i)^{-1}o$ and $o$. As $G_\Gamma$ is symmetric, we have
$(a_i)^{-1} \in G_\Gamma$. By Lemma \ref{orth}, any geodesic between
$(a_i)^{-1}o$ and $o$ has a $\tau$-bounded projection to $C(h)$.
Thus, $p_i$ has a $\tau$-bounded projection to $X_i$.
\end{proof}

In order to use Lemma \ref{degrowth}, we need show that $\kappa:
\mathcal W(A, h^n) \to G$ is injective for sufficiently large $n$.

\begin{lem} \label{injective}
There exists $N = N(h) >0$ with the following property.

Let $W = a_1 h^n a_2 \ldots a_k h^n, W'=a_1'h^n a_2' \ldots a_l' h^n
$ be two words in $\mathcal W(A, h^n)$ for $n > N$ such that
$\kappa(W) = \kappa(W')$. Then there is a constant $L = L(h, n) > 0$
we have $d(a_1 \Gamma o, a_1'\Gamma o) < L$.
\end{lem}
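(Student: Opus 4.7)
I would attach to $W$ and $W'$ their normal paths $\gamma = p_1 q_1 \cdots p_k q_k$ and $\gamma' = p_1' q_1' \cdots p_l' q_l'$, both starting at $o$ and sharing the endpoint $\kappa(W)\cdot o = \kappa(W')\cdot o$, and compare them through a geodesic $\alpha$ between these common endpoints. For $n$ exceeding the threshold of Lemma \ref{Rnear2}, $\alpha$ contains points $z_1, w_1$ within $R$ of $a_1 o,\ a_1 h^n o$, and points $z_1', w_1'$ within $R$ of $a_1' o,\ a_1' h^n o$; the tracking subsegments $[z_1, w_1]$ and $[z_1', w_1']$ of $\alpha$ then each have length at least $d(1, h^n) - 2R$, which I would make arbitrarily large by enlarging $n$.

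\textbf{Main step.} The key claim to isolate is that $X_1 = X_1'$, where $X_1 = a_1 C(h_-, h_+)$ and $X_1' = a_1' C(h_-, h_+)$. If they were distinct, Lemma \ref{hyperbolic}(4) combined with Lemma \ref{equivalence} would bound the overlap between the long tracking segments $[z_1, w_1]$ and $[z_1', w_1']$ on $\alpha$ by a uniform constant, forcing them to be essentially disjoint. The first tracking on $\alpha$ (the one closest to $o$) would then belong to exactly one of the two tracking sequences; say $\gamma$'s $X_1$-tracking comes first. The first tracking from $\gamma'$, namely $X_1'$'s, would then have to coincide with the tracking of some $X_m$ from $\gamma$ with $m > 1$ (using bounded intersection once more to force the two long tracking segments to match), contradicting the order of the trackings along $\alpha$ inherited from $\gamma$.

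\textbf{Conclusion and main obstacle.} Once $X_1 = X_1'$ is established, Lemma \ref{hyperbolic}(2) yields $a_1 E(h) = a_1' E(h)$, so $a_1^{-1} a_1' \in E(h)$. Since Lemma \ref{hyperbolic}(1) gives $E(h) = \bigsqcup_{i=1}^m f_i \langle h \rangle$ with $\langle h \rangle \subset \Gamma$, we obtain $a_1^{-1} a_1' = f_i h^k$ for some $i$ and some $k \in \mathbb Z$, whence $a_1' \Gamma = a_1 f_i \Gamma$ and $d(a_1 \Gamma, a_1' \Gamma) \le \max_j d(1, f_j)$, a constant depending only on $h$ (in particular already giving the desired $L$). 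The main obstacle is the order-preservation argument in the middle step, specifically ruling out $X_1 = X_{j_0}'$ for $j_0 > 1$, which would produce an error term growing with the accumulated lengths $d(1, a_2') + \cdots + d(1, a_{j_0}')$. Since the paper explicitly avoids using the quasi-geodesic property of admissible paths, this ordering must be extracted from Lemma \ref{Rnear2} alone together with bounded intersection, leveraging that $d(1, h^n)$ eventually dominates every other constant once $n > N(h)$.
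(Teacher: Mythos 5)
Your endgame is fine and matches the paper's treatment of the case $X_1 = X_1'$: there one gets $a_1^{-1}a_1' \in E(h)$ from Lemma \ref{hyperbolic}(2), and since $E(h)$ is a finite extension of $\langle h\rangle \le \Gamma$ (so $E(h)\subset N_M(\Gamma)$), the bound on $d(a_1\Gamma, a_1'\Gamma)$ follows. The genuine gap is your middle step, and it sits exactly where the difficulty of the lemma lies. First, nothing forces the $X_1'$-tracking subsegment of $\alpha$ to coincide with the tracking of some $X_m$ of $\gamma$: the trackings of $X_1,\ldots,X_k$ do not cover $\alpha$, and the $X_1'$-segment could lie in a portion of $\alpha$ corresponding to a letter $a_m$, whose length is unbounded since the elements of $A$ are arbitrary. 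Second, and more seriously, even when the trackings do match there is no ordering contradiction to extract: the configuration $a_1' \approx a_1 h^n a_2$ (so that $X_1' = X_2$ and the trackings of $\gamma'$ are those of $\gamma$ shifted by one) is perfectly consistent with Lemma \ref{Rnear2}, bounded intersection, and the order of trackings along $\alpha$, yet it would make $d(a_1\Gamma, a_1'\Gamma)$ comparable to $d(1,\bar a_2)$, hence unbounded. What rules this configuration out is not ordering but the minimality of the representatives in $G_\Gamma$, which your argument never invokes: since $h \in \Gamma$ one has $C(h_-,h_+) \subset N_M(\Gamma)$, so if the geodesic $p_1' = [o, a_1'o]$ stayed $\mu$-close to $a_1C(h_-,h_+)$ over a distance larger than $2\mu + 2M$, translating by a suitable element of $\Gamma$ would shortcut it, contradicting $d(a_1'o, o) = d(a_1'o, \Gamma o)$. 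This entry/exit estimate for $p_1'$ in $N_\mu(a_1C(h_-,h_+))$ is the engine of the paper's proof (it is what makes the disjoint-tracking case contradict the choice $d(1,h^n) > 2R + 2M + B + \tau + 10\mu + 3\epsilon$), and it is precisely what the remark after the lemma warns about: $G_\Gamma$ is not a subgroup, so no cancellation or ordering argument built from Lemma \ref{Rnear2} and bounded intersection alone can close the step you flag.

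Note also that the paper does not prove your stronger claim $X_1 = X_1'$. Besides the case $X_1 = X_1'$, it allows a second good case in which the two tracking intervals of $\alpha$ overlap; there it only concludes $d(a_1\Gamma, a_1'\Gamma) < L$ with $L$ depending on $n$, which is all the lemma asserts (and all that is needed later, since $A$ is then chosen $L$-separated). So your route aims at a stronger statement whose truth is not addressed by the paper, while missing the one ingredient (minimality of coset representatives together with $C(h_-,h_+)\subset N_M(\Gamma)$) that the actual proof turns on.
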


\begin{rem}
By Lemma \ref{normalpath} the normal path is admissible and thus is
a quasigeodesic for large $n$. It would be tempting to conclude the
injectivity of $\kappa$ immediately by a general principle to embed
a Schottky group in $G$. However, the non-triviality of this lemma
lies in the fact that $G_\Gamma$ is not a subgroup in $G$. Hence the
cancellation in $W^{-1}W'$ would not necessarily produce a word
again in $\mathcal W(A, h^n)$.
\end{rem}

\begin{proof}
Let $\gamma = p_1 q_1 \ldots p_k q_k, \gamma' = p_1' q_1' \ldots
p_l' q_l'$ be normal paths for $W, W'$ respectively. Possibly,
$k\neq l$. Fix a geodesic $\alpha$ between $o$ and $go$, where
$g=\kappa(W)$.

By Proposition \ref{admissible} and Lemma \ref{normalpath}, there
exist $R = R(h), N=N(h)>0$ such that $\alpha$ is a $R$-fellow
traveller for both $\gamma$ and $\gamma'$. Also take $N$ to satisfy
the following inequality
\begin{equation}\label{N}
d(o, h^no) > 2R + 2M + B + \tau + 10\mu + 3\epsilon,
\end{equation}
for any $n > N$.

Since $h$ is contracting, there is a constant $M > 0$ by Lemma
\ref{maxelem} such that $C(h) \subset N_M(\Gamma o)$.

Consider contracting sets $X = a_1 C(h)$ and $X' =  a_1' C(h)$. We
first consider the case that $X = X'$.  It follows that $d(a_1
\Gamma o, a_1'\Gamma o) < 2M$. Setting $L = 2M$ would finish the
proof. Hence, we assume that $X \ne X'$ in the remainder of proof.

Let $z, w$ be the first and last points on $\gamma$ respectively
such that $z, w \in N_\mu(X) \cap \alpha.$ Since $\alpha$ is a
$R$-fellow traveller for $\gamma$, we have that $d(z, (q_1)_-), d(w,
(q_1)_+) \le R$.  Similarly, choose the first and last points $u, v$
respectively on $\gamma$ such that $u, v \in N_\mu(X') \cap \alpha$
and $d(u, (q_1')_-), d(v, (q_1')_+) \le R$.

We analyze the configuration of pairs $\{z, w\}$ and $\{u, v\}$ as
follows:

\textbf{Case 1).} $[z, w]_\gamma$ and $[u, v]_\gamma$ intersects
nontrivially. For definiteness, assume that $u$ is in $[z,
w]_\gamma$. The other cases are similar.

As $u, w \in N_\mu(X) \cap N_\mu(X')$, we see that $d(u, w) <
\nu(\mu)$ by the $\nu$-bounded intersection of $\mathbb X$. Hence,
$d(a_1 o, a_1' o) < L$, where $L=L(n)$ is defined as follows
$$
L = 2d(h^n o, o) + R + \nu(\mu).$$

\textbf{Case 2).} $[z, w]_\gamma$ and $[u, v]_\gamma$ are disjoint.
Assume that $u, v \in [w, go]_\gamma$. The other case is symmetric.
We shall lead to a contradiction.

Without loss of generality, assume that the geodesic $p_1'$
intersects non-trivially in $N_\mu(X)$. The case that $p_1' \cap
N_\mu(X) = \emptyset$ can be seen as be degenerated in the following
proof. See Figure \ref{fig:fig}.

\begin{figure}[htb] % You can use the htb to tell latex: place here if possible, if not try top of page, if not try bottom of page.
\centering \scalebox{0.8}{
\includegraphics{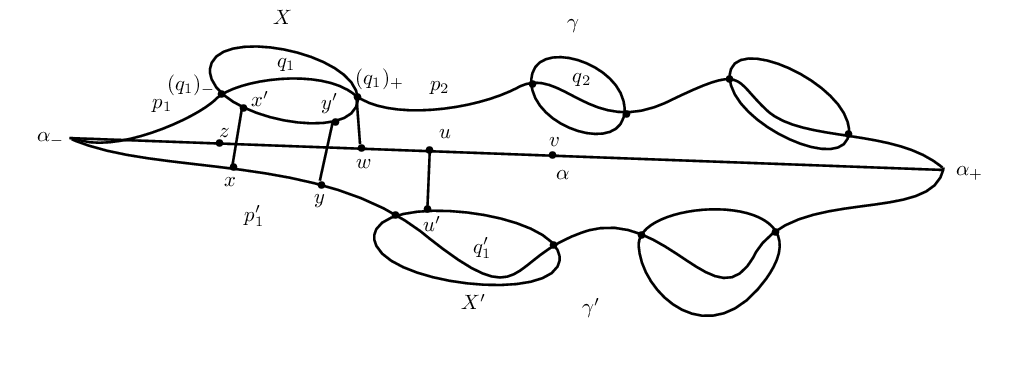} % Do not put file extension, only name
} \caption{Proof of Lemma \ref{injective}} \label{fig:fig}
\end{figure}

Let $x, y$ be the first and last points of $p_1'$ such that $x, y
\in N_\mu(X)$. We claim that $d(x, y) \le 2 \mu + 2M$. In fact, as
$a_1' \in G_\Gamma$, we have $d(a_1' o, \Gamma o) =d(a_1'o, o)$.
Then for any $x, y \in [o, a_1'o]$, we have $d(\Gamma x, \Gamma y) =
d(x, y)$. Since $x, y \in N_\mu(X)$ and $X \subset N_M(a_1 \Gamma
o)$, we see that $d(x, a_1 \Gamma o), d(y, a_1 \Gamma o) \le \mu +
M$. This implies that $d(\Gamma x , \Gamma y) \le 2\mu + 2M$. Hence,
$d(x, y) \le 2\mu +2M$.

Project $x, y$ to $x', y' \in X$ respectively. It follows that
\begin{equation}\label{M2}
d(x', y') \le 4\mu + 2M.
\end{equation}

By Lemma \ref{orth} and the fact $[o, x]_{p_1'} \cap N_\mu(X)
=\emptyset$, we obtain the following:
\begin{equation}\label{L2}
d(x', (q_1)_-) \le \diam{\proj_X(p_1)} + \diam{\proj_X([o,
x]_{p_1'})} \le \tau + \epsilon.
\end{equation}

It is easy to see that the projection of a geodesic segment of
length $R$ to $X$ is upper bounded by $R + 2\mu + \epsilon$. Let
$u'$ be a projection point of $u$ to $X'$. Thus,  $\diam{\proj_X([u,
u'])} \le R+2\mu + \epsilon, \;\diam{\proj_X([w, (q_1)_+])}   \le
R+2\mu + \epsilon$. As a consequence, we estimate the following
distance:
\begin{equation}\label{R2}
\begin{array}{rl}
d(y', (q_1)_+) & \le \diam{\proj_X([y, (p_1')_+])} + \diam{\proj_X(X')} + \diam{\proj_X([u, u'])} \\
& + \diam{\proj_X([w, u]_\gamma)}  +
\diam{\proj_X([w, (q_1)_+])} \\
& \le 2\epsilon+B + 2(R + 2\mu + \epsilon) \le 2R + B + 6\mu +
2\epsilon.
\end{array}
\end{equation}

With (\ref{M2}), (\ref{L2}) and (\ref{R2}),  it follows that
$$\len (q_1) < d(x', (q_1)_-) + d(x',
y') + d(y', (q_1)_+) \le 2R + 2M + B + \tau + 10\mu + 3\epsilon.$$
This gives a contradiction with the choice of $N$ in (\ref{N}).
Hence, the case 2) is impossible.
\end{proof}

\subsection{Proof of Theorem \ref{tight}}
Since $\Gamma$ is an infinite normal subgroup, by Lemma \ref{normal}
there are infinitely many contracting elements in $\Gamma$. Fix a
contracting element $h$ in $\Gamma$.

Let $n > N$ be a number and $L=L(h, n)$, where $N=N(h), L=L(h, n)$
are the constants given by Lemma \ref{injective}.

Let $A$ be a subset in $G_\Gamma$ such that for any distinct $a, a'
\in A$,  $d(\Gamma a o, \Gamma a' o)
> L$, and for any $g \in G$, there
exists $a \in A$ such that $d(\Gamma go, \Gamma ao) < L$. By Lemma
\ref{injective}, the evaluation map $\kappa: \mathcal W(A, h^n) \to
G$ is injective.

By construction, $\PS{G_\Gamma} =\PS{\bar G}$. Thus, $\PS{G_\Gamma}$
is divergent at $s = \delta_{\bar G}$ by Lemma \ref{divquotient}. By
Lemma \ref{convradius}, we have $\delta_{A} = \delta_{G_\Gamma} $
and $\PS{A}$ is also divergent at $s=\delta_{A}$.

By Lemma \ref{degrowth}, it follows that $\delta_{G} \ge
\delta_{A\star h^n} > \delta_{A} \ge \delta_{\bar G}$. The proof is
complete.

\section{Groups with non-trivial Floyd boundary}
We recall the construction of Floyd boundary of a finitely generated
group $G$. Let $f: \mathbb N \to \mathbb R_+$ be a \textit{Floyd
function} $f: \mathbb{N} \to \mathbb R_{>0}$ which satisfies the
\textit{summable} condition
$$\sum\limits_{n=1}^{\infty} f(n) < \infty$$
 and the \textit{$\lambda$-delay} property $$\lambda < f(n+1)/f(n) < 1$$ for $\lambda \in ]0, 1[$.

Let $\Gx$ be the Cayley graph of $G$ with respect to a generating
set $S$. Denote by $d_S$ the combinatorial (word) metric on $\Gx $.
The \textit{Floyd length} of each edge $e$ in $\Gx $ is set to be
$f(n)$, where $n = d_S(1, e)$. This naturally defines a length
metric $\rho$ on $\Gx$.  Let $\Gf$ be the Cauchy completion of $G$
with respect to $\rho$. The complement $\pGf$ of $G$ in $\Gf$ is
called \textit{Floyd boundary}. $\pGf$ is \textit{non-trivial} if
$\pGf$ contains at least three points. We refer the reader to
\cite{Floyd}, \cite{Ka} and \cite{GePo4} for more detail.

By construction, the topological type of Floyd boundary depends on
the choice of $f$ and $S$. However, the following lemma roughly says
that the non-triviality of Floyd boundary can be thought of as a
property of the pair $(G, S)$.

\begin{lem}\label{nontri}
Suppose that $G$ has a non-trivial Floyd boundary $\pGf$ for some
finite generating set $S$ and Floyd function $f$. Then for any
finite generating set $T$, there is a Floyd function $g$ such that
$\partial_{T, g} G$ is non-trivial.
\end{lem}
\begin{proof}
It is well-known that the identity map extends to a $(K,
0)$-quasi-isometric map $\varphi: \Gy \to \Gx$ for some constant $K
\ge 1$. Suppose that $\pGf$ is non-trivial, where the Floyd function
$f$ satisfies the $\lambda$-delay property for $\lambda \in ]0, 1[$.
We shall find a Floyd function $g$ such that $\varphi$ extends to a
continuous map $\hat \varphi:
\partial_{T, g} G \to \pGf$. This clearly shows the conclusion.

Recall a result of Gerasimov-Potyagailo \cite[Lemma 2.5]{GePo2} that
if $f(n)/g(Kn)$ is upper bounded by a constant, then $\varphi$
extends to a continuous map $\hat \varphi: \partial_{T, g} G \to
\pGf$. Without loss of generality, we assume that $K =2$. The
general case follows by a finite number of repeated applications of
the case $K=2$.

For $n\ge 1$, define $g(2n) = f(n)$ and $g(2n-1) = (f(n-1) +
f(n))/2$, where $g(0) = 0$ by convention. It is obvious that
$f(n)/g(2n) =1$. It suffices to verify that $g(n)$ is a Floyd
function. By direct computations one sees that $g$ satisfies the
$2\lambda/(1+\lambda)$-delay property. The proof is complete.
\end{proof}

In the sense of Lemma \ref{nontri} we can speak of the
non-triviality of Floyd boundary of a group. From now on, assume
that $G$ admits a nontrivial Floyd boundary.

In \cite{Ka}, Karlsson showed that the left multiplication on $G$
extends to a convergence group action of $G$ on $\pGf$. See
\cite{Bow2} for a reference on convergence groups actions.

\begin{lem} \label{hyperbolic}
Let $h$ be a hyperbolic element in $G$. Then $h$ is contracting.
\end{lem}
\begin{proof}
This is essentially proven in \cite{GePo4} as Proposition 8.2.4
therein with a greater generality: if a subgroup $H$ acts
cocompactly outside its limit set in $\Gf$, then $Ho$ is a
contracting subset.

It is a well-known fact in a convergence group that for a hyperbolic
element $h$, the subgroup $\langle h\rangle$ acts properly and
cocompactly on the complement in $\Gf$ of the two fixed points of
$h$. Hence, the conclusion follows.
\end{proof}
We below examine two particular cases of Floyd functions which have
appeared in the literatures.

In \cite{Floyd}, Floyd originally considered the case of "polynomial
type", for instance $f(n) =1/(n^2 +1)$. In this case, the shapes of
Floyd boundary behave nicely in the spirit of coarse geometry:  the
construction using different finite generating sets shall lead to
the homeomorphic Floyd boundaries.

The exponential type of Floyd function $f(n) = \lambda^n$ for
$\lambda \in ]0, 1[$ is exploited by Gerasimov-Potyagailo
\cite{Ge2}, \cite{GePo4},\cite{GePo2} and \cite{GePo3} in the study
of relatively hyperbolic groups. In \cite{Ge2}, Gerasimov showed
that for a non-elementary relatively hyperbolic group $G$ with a
finite generating set $S$, there exists $\lambda \in ]0, 1[$ such
that there is a continuous surjective map from the Floyd boundary
$\partial_{S, f} G$ for $f(n)=\lambda^n$ to the Bowditch boundary of
$G$. Hence, all non-elementary relatively hyperbolic groups have
non-trivial Floyd boundary. This proves Theorem \ref{tightfloyd}.

\section{Some open questions}
We conclude with some open questions about growth tightness. Define
the entropy $Ent(G)$ of a finite generated group $G$ to be the
infimum $\inf\{\g{G, S}\}$ over all finite generating sets. If there
exists $S$ such that $Ent(G)=\g {G, S}$, we say that entropy of $G$
is realized by $S$.
\begin{quest}\label{entropy}
Let $G$ be a relatively hyperbolic group. Is it true that $Ent(G)
>Ent(\bar G)$ for every quotient $\bar G=G/\Gamma$ where $\sharp
\Gamma=\infty$?
\end{quest}
\begin{rem}
It is an open problem that whether the entropy of a hyperbolic group
is realized or not by a finite generating set. By growth tightness
of hyperbolic groups, a positive solution would answer positively
Question \ref{entropy}. Note that there are many relatively
hyperbolic groups constructed in \cite{Sam2} (free products of a
non-Hopfian group with any non-trivial group) whose entropy are not
realized by any generating set.
\end{rem}

Taking account into Corollary \ref{tightCAT}, we do not know yet an
answer to the following.
\begin{quest}
Are CAT(0)-groups with rank-1 elements growth tight with respect to
word metrics?
\end{quest}

In the appendix, we show that a group with a contracting element
contains a hyperbolically embedded subgroup. In particular, it may
be natural to explore the growth tightness for the other interesting
classes of groups with a hyperbolically embedded subgroup.
\begin{quest}
Are mapping class groups growth tight? and what about $Out(F_n)$?
\end{quest}

Another interesting question is analogous to the spectrum gap
problem in \cite{DPPS} for critical exponents of Kleinian groups.
For simplicity, we only state it for word metrics. The version for a
group action on a metric space can be similarly formulated.
\begin{quest}\label{gap}
Suppose that the pair $(G, S)$ is growth tight. Let $\Delta(G,
S)=\inf \{\g {G, S}-\g{\bar G, \bar S}\}$ over all proper quotient
$\bar G$. Is it true that $\Delta(G, S)=0$?
\end{quest}
\begin{rem}
If $G$ is a relatively hyperbolic group, we answer positively the
question for word metrics, and hyperbolic metrics under mild
assumptions in \cite{YANG7}.
\end{rem}

\appendix
\setcounter{secnumdepth}{0}
\section{Appendix}

We include this appendix to make some connections with groups with a
hyperbolically embedded subgroup in \cite{DGO}.

\begin{lema}\label{hypembed}
Suppose $G$ acts properly on a geodesic metric space $(Y, d)$ with a
contracting subgroup $H$. Then $H$ is of finite index in a
hyperbolically embedded subgroup in $G$.
\end{lema}

\begin{rem}
An earlier result of Sisto in \cite{sisto} proved that a weakly
contracting element is always contained in a hyperbolically embedded
subgroup. Technically, our definition of contracting property
differs from his. It seems that a contracting element in the sense
of Sisto is contracting in this paper. Another difference is that
here this lemma handles with a slightly general situation where $H$
need not be generated by a contracting element.

Compared with Theorem 4.42 in \cite{DGO}, this lemma could produce
hyperbolically embedded subgroups which are not hyperbolic. For
example, every maximal parabolic subgroup in a relatively hyperbolic
group is contracting and is thus a hyperbolically embedded subgroup.
\end{rem}

By the discussion in Section 7, we note the following corollary
which seems not be recorded in the literature.
\begin{cora}\label{floydhyp}
If a group has a non-trivial Floyd boundary, then every hyperbolic
element is contained in a hyperbolically embedded subgroup.
\end{cora}

In the reminder of the appendix, we give a proof of Lemma
\ref{hypembed}. The outline follows the proof of Theorem 4.42 in
\cite{DGO}. Our arguments are greatly simplified and routine by
constructing appropriate admissible paths to use Proposition
\ref{admissible}. And our setting is more general, as $Y$ is not
assumed to be a hyperbolic space.

Let $\mathbb X$ be a contracting system with bounded intersection in
$Y$. Define $d_X(Z, W) =\diam{\proj_X(Z)\cup \proj_X(W)}$ for any
$X, Z, W \in \mathbb X$. We shall verify that $d_X$ has the
following properties.

\begin{lema}\label{projaxiom}
There exists $K>0$ such that
\begin{enumerate}
\item
at most one of $\{d_X(Z, W), d_Z(X, W), d_W(Z, X)\}$ is bigger than
$K$.
\item
$ \sharp \{X\in \mathbb X: d_X(Z, W)>K\} $ is finite for any two $Z,
W\in \mathbb X$.
\end{enumerate}
\end{lema}
\begin{proof}
Since $\mathbb X$ has $\nu$-bounded intersection, there exists $B>0$
such that $\diam{\proj_X(Z)} < B$ for two distinct $X, Z\in \mathbb
X$. By contracting property, the projection of any point in $Y$ to
$X\in \mathbb X$ is uniform upper bounded, say also by the constant
$B$.  Obviously for any $x \in \proj_X(Z), z\in \proj_Z(X)$, we have
$\diam{\proj_X([x, z])} <2B$. We denote by $p_{xz}$ an arbitrary
geodesic $[x, z]$ with $x \in \proj_X(Z), z\in \proj_Z(X)$.

Let $D=D(1, 0, \nu, 2B), R=R(1, 0, \nu, 2B)$ be given by Proposition
\ref{admissible}. Thus it suffices to set $K = D+4B$ for the
statement (1). Indeed, suppose to the contrary that $$d_X(Z, W),
d_W(X, Z) >K.$$ Then the path $$\gamma=p_{zx}\cdot [(p_{zx})_+,
(p_{xw})_-] \cdot p_{xw}\cdot [(p_{xw})_+, (p_{wz})_-]\cdot
p_{wz}\cdot [(p_{wz})_+, (p_{xz})_-]$$ is a $(D, 1, 0, \nu,
4B)$-admissible path. By Proposition \ref{admissible}, $\gamma$ is a
quasigeodesic. This is a contradiction, as $\gamma_-=\gamma_+$.

To prove (2), we fix a geodesic $p_{zw}$ between $Z, W$. Set
$K=D+4B+2R$. For any $X$ with $d_X(Z, W)>K$ we can construct an
admissible path with same endpoints as $p_{zw}$. By Proposition
\ref{admissible}, $p_{zw}$ is a $R$-fellow traveller. This implies
that $\diam{N_R(X) \cap p_{zw}} > D$. Clearly, there are only
finitely many such $X$, as different $N_R(X) \cap p_{zw}$ have
uniformly bounded overlap.
\end{proof}

\begin{proof}[Proof of Lemma \ref{hypembed}]
Denote $\mathbb X=\{gEo: g\in G\}$, where $E=E(H)$. By Lemma
\ref{projaxiom}, the set $\mathbb X$ with the distance function
$d_X(\cdot,\cdot)$ satisfies the axioms in \cite[Section 2.1]{BBF}.
Hence, we can construct a projection 1-complex $\mathcal C(\mathbb
X)$ which is a graph with vertex set $\mathbb X$. It is proved in
\cite{BBF} that $\mathcal C(\mathbb X)$ is a quasi-tree and thus is
a hyperbolic space.

Fix $X=Eo \in \mathbb X$. Let $Z=gEo\in \mathcal C(\mathbb X)$ be an
adjacent vertex to $X$. As in proof of Lemma \ref{projaxiom}, we
choose a geodesic $p_{xz}$ between $X, Z$ in $Y$. Then
$(p_{xz})_+=g_{xz}(p_{xz})_-$ for $g_{xz}\in G$. By construction, we
can choose $g_{xz} \in EgE$.

As $\mathcal C(\mathbb X)$ is connected, we see that
$S:=E\cup\{g_{xz}\}$ generates the group $G$. Indeed, for any $g\in
G$, we connect $X=Eo$ and $Z=gEo$ by a path in $\mathcal C(\mathbb
X)$. If $Z$ is adjacent to $X$ in $\mathcal C(\mathbb X)$, then
$EgE=Eg_{xz}E$ and thus $g \in Eg_{xz}E$. The general case follows,
as $g$ can be written as a finite product of elements in $Eg_{xz}E$.

We next prove that $\iota: X \in \mathbb X \to gE$ is a
quasi-isometric map between $(\mathcal C(\mathbb X), d_{\mathcal
C})$ and $(\Gx, \bar d)$, where $\bar d$ is the combinatorial metric
on $\Gx$. By the last paragraph, we have that $\bar d(1, g) \le
3d_{\mathcal C}(Eo, gEo)$.  On the other hand,  it is obvious that
$d_{\mathcal C}(Eo, gEo) \le d(1, g)$.

For any $g, h \in E$, we define a distance $\bar d_E(g, h)$ on $E$
to be the shortest distance of a path in $\Gx$ intersecting in $E$
only at its endpoints. For any $1, g \in E$, we will show that $d(o,
go) \le C \bar d_E(1, g)$ for some constant $C >0$. This clearly
concludes the proof, by definition of a hyperbolically embedded
subgroup in \cite{DGO}.

Let $p$ be a path between $1, g$ in $\Gx$. Write $g=x_1 x_2 \cdots
x_{n-1} x_n$, where $x_i\in E \cup \{g_{xz}\}$. For each generator
$x_i$ in $g$, we choose $p_i$ a geodesic between $x_1\cdots x_{i-1}
o, x_1\cdots x_i o$. In order to prove that $d(o, go) \le C \bar
d_E(1, g)$, it suffices to show that each $p_i$ has a uniform
bounded projection by $C$ to $X=Eo$.

By definition of $g_{xz}$,   $p_1$ and $p_n$ has bounded projection
to $Eo$. For other paths $p_i$, either $p_i$ has two endpoints in
$(p_i)_-Eo$, or $p_i$ is a geodesic between $(p_i)_-Eo$ and
$(p_i)_+Eo$. In the former case, $p_i$ has bounded projection to
$Eo$, as $(p_i)_-Eo$ does to $Eo$. In the latter case, $p_i$ also
has bounded projection to $Eo$. This follows from the construction
of a projection complex. In \cite{BBF}, it is proved that if two
vertices $Z, W$ are connected by an edge, then $\diam{d_X(Z \cup
W)}$ is uniformly small for any $X \in \mathbb X$. The proof is thus
complete.
\end{proof}

% For alignments use AmS-LaTeX constructions not \eqnarray.

% For acknowledgements use \ack or \acks immediately before the references
\ack The author is grateful to Anna Erschler for helpful
discussions. The author would like to thank Leonid Potyagailo for
his constant encouragement and support over years.

\bibliographystyle{amsplain}
\bibliography{bibliography}

\end{document}